\renewcommand{\O}{\mathcal{O}}
\renewcommand{\P}{\mathbb{P}}
\newcommand{\Z}{\mathbb{Z}}
\newcommand{\Ext}{\mathrm{Ext}}
\newcommand{\Hom}{\mathrm{Hom}}
\newcommand{\RGamma}{\mathrm{R}\Gamma\,}
\newcommand{\HH}{\mathrm{HH}}
\newcommand{\Dbcoh}{D^b_{\!\mathrm{coh}}}
\newcommand{\mA}{\mathcal{A}}
\newcommand{\dual}{{\scriptstyle\vee}}
\newcommand{\iso}{\simeq}
\newcommand{\caniso}{\cong}
\newcommand{\isoarrow}{\xrightarrow{\sim}}
\newcommand{\monoarrow}{\hookrightarrow}
\newcommand{\epiarrow}{\twoheadrightarrow}
\declaretheoremstyle[
headformat=\NUMBER.\,\NAME\NOTE,
postheadspace=.5em,
spaceabove=6pt,
headfont=\normalfont\small\scshape,
notefont=\normalfont\small\mdseries, notebraces={(}{)},
bodyfont=\normalfont\itshape
]{plainswap}
\declaretheoremstyle[
headformat=\NAME\NOTE,
postheadspace=.5em,
spaceabove=6pt,
headfont=\normalfont\small\scshape,
notefont=\normalfont\small\mdseries, notebraces={(}{)},
bodyfont=\normalfont\itshape
]{nonumplainswap}
\declaretheoremstyle[
headformat=\NUMBER.\,\NAME\NOTE,
postheadspace=.5em,
spaceabove=6pt,
headfont=\normalfont\small\scshape,
notefont=\normalfont\mdseries, notebraces={(}{)},
bodyfont=\normalfont
]{definitionswap}
\declaretheoremstyle[
headformat=\NAME\NOTE,
postheadspace=.5em,
spaceabove=6pt,
headfont=\normalfont\itshape,
notefont=\mdseries, notebraces={(}{)},
bodyfont=\normalfont
]{myremark}
\declaretheorem[style=plainswap, name=Theorem, sharenumber=subsection]{theorem}
\declaretheorem[style=plainswap, numberlike=theorem, name=Proposition]{proposition}
\declaretheorem[style=plainswap, numberlike=theorem, name=Lemma]{lemma}
\declaretheorem[style=plainswap, numberlike=theorem, name=Corollary]{corollary}
\theoremstyle{definition}
\declaretheorem[style=definitionswap, numberlike=theorem, name=Definition]{definition}
\theoremstyle{myremark}
\newtheorem*{remark}{Remark}
\theoremstyle{remark}
\numberwithin{equation}{theorem}
\begin{document}

\title{Categorical Torelli theorem for hypersurfaces}
\author{Dmitrii Pirozhkov}

\begin{abstract}
  Let $X \subset \P^{n+1}$ be a smooth Fano hypersurface of dimension $n$ and degree~$d$. The derived category of coherent sheaves on~$X$ contains an interesting subcategory called the Kuznetsov component~$\mA_X$. We show that this subcategory, together with a certain autoequivalence called the rotation functor, determines $X$ uniquely if~$d > 3$ or if~$d = 3$ and~$n > 3$. This generalizes a result by D.~Huybrechts and J.~Rennemo, who proved the same statement under the additional assumption that $d$ divides $n+2$.
\end{abstract}

\maketitle

\section{Introduction}

Reconstruction of an algebraic variety from some of its invariants is a classical endeavor that started with Torelli proving that a smooth projective curve is determined by its Jacobian as a polarized abelian variety. In 1997 A.~Bondal and D.~Orlov proved \cite{bondal-orlov} that Fano varieties and varieties with ample canonical class can be reconstructed from their derived categories of coherent sheaves. In this note we are interested in some developments arising from Bondal--Orlov's theorem.

To discuss our setting, we first need some notation. Let $k$ be a field of characteristic zero, and let~$V$ be an \mbox{$(n+2)$-dimensional} vector space. We work with an~$n$-dimensional smooth Fano hypersurface~$X \subset \P(V)$ of degree~$d$. The bounded derived category of coherent sheaves on $X$ admits a semiorthogonal decomposition:
\begin{equation}
  \label{eq:intro sod}
  \Dbcoh(X) =  \langle \mA_X, \O_X, \O_X(1), \ldots, \O_X(n-d+1) \rangle,
\end{equation}
where the category $\mA_X$ is called a \emph{Kuznetsov component} or a \emph{residual category} of $\Dbcoh(X)$. The decomposition \eqref{eq:intro sod} implies that, according to \cite[Prop.~3.8]{orlov-glueing}, the category $\Dbcoh(X)$, and hence by Bondal--Orlov's theorem the variety $X$ itself, can be reconstructed from the following pieces of data:
\begin{itemize}
\item The category $\langle \O_X, \O_X(1), \ldots, \O_X(n-d+1) \rangle$ that depends only on $n$ and $d$, not on the choice of the hypersurface $X$;
\item The Kuznetsov component $\mA_X$;
\item A certain glueing data between the two categories above.
\end{itemize}

In general, it is impossible to determine $X$ just from the Kuznetsov component $\mA_X$, without the glueing data. An example with cubic fourfolds is described in \cite{pertusi}. In the paper~\cite{huybrechts-rennemo} D.~Huybrechts and J.~Rennemo suggested a different approach to this problem. They used a particular autoequivalence~$\Phi_{\mA_X}$ of $\mA_X$ called the \emph{rotation functor} (see Definition~\ref{def:rotation functor}). They proved the following reconstruction theorem:

\begin{theorem}[{\cite[Cor.~1.2]{huybrechts-rennemo}}]
  A smooth Fano hypersurface $X \subset \P(V)$ of degree $d$ and dimension $n$ with $d | (n+2)$ is determined by the pair $(\mA_X, \Phi_{\mA_X})$ composed of the Kuznetsov component and the rotation functor (as a dg-category and a dg-endofunctor).
\end{theorem}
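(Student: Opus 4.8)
The strategy is to pass from the categorical data to matrix factorizations, extract from them the graded Jacobian ring of the defining equation, and then invoke a classical reconstruction result for homogeneous polynomials. Fix a defining form $f \in \mathrm{Sym}^d V^{\dual}$ of $X$, let $R = \mathrm{Sym}^\bullet V^{\dual}$ be the polynomial ring graded by degree, and let $\mathrm{MF}^{gr}(f)$ denote the dg-category of $\Z$-graded matrix factorizations of $f$ (equivalently the graded singularity category of $R/f$), equipped with its internal-degree-shift autoequivalence, which we denote $(1)$. Orlov's theorem on graded singularity categories, applied in the Fano range $d \le n+1$, provides an equivalence of dg-categories $\mA_X \iso \mathrm{MF}^{gr}(f)$ under which the rotation functor $\Phi_{\mA_X}$ of Definition~\ref{def:rotation functor} is identified with $(1)$, up to an explicit cohomological shift. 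The hypothesis $d \mid (n+2)$ is what makes this picture rigid: it forces $\mA_X$ to be an honest Calabi--Yau category (because $(d) \caniso [2]$ on matrix factorizations, so that $\Phi_{\mA_X}^{\,d}$ becomes a mere shift), which both pins down the identification above and, more importantly, will let us recover the full ring structure below. Thus the problem reduces to reconstructing $f$, up to the action of $\GL(V)$, from the pair $\bigl(\mathrm{MF}^{gr}(f),\,(1)\bigr)$.

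The heart of the argument is to produce, functorially in a dg-category equipped with an autoequivalence, the graded ring $\mathrm{Jac}(f) = R/(\partial_0 f, \dots, \partial_{n+1} f)$ together with its grading and its multiplication. The intrinsic invariant to use is the family of twisted Hochschild cohomology spaces $\HH^\bullet(\mA_X;\,\Phi_{\mA_X}^{\,j})$ for $j \in \Z$, i.e.\ derived natural transformations $\mathrm{id} \nattransform \Phi_{\mA_X}^{\,j}[\bullet]$; these depend only on the dg-category and the dg-endofunctor, and they assemble into a bigraded ring under composition. Transporting along the equivalence of the previous paragraph, $\HH^\bullet(\mA_X;\,\Phi_{\mA_X}^{\,j})$ becomes the weight-$j$ piece of the Hochschild cohomology of $\mathrm{MF}^{gr}(f)$. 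A Hochschild-cohomology computation in the spirit of the Dyckerhoff--Preygel theory of matrix factorizations --- carried out, say, via the residue-field matrix factorization $k^{\mathrm{stab}}$, which is a compact generator, and using crucially that smoothness of $X$ makes $\partial_0 f, \dots, \partial_{n+1} f$ a regular sequence in $R$ so that the relevant Koszul complex has cohomology concentrated in $\mathrm{Jac}(f)$ --- then identifies $\bigoplus_{j \in \Z} \HH^\bullet(\mA_X;\,\Phi_{\mA_X}^{\,j})$ with $\mathrm{Jac}(f)$ as a bigraded ring: the weight $j$ recovers the internal grading of $\mathrm{Jac}(f)$ (up to an overall shift depending on $n$ and $d$), and composition of natural transformations recovers the product. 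In particular the graded ring $\mathrm{Jac}(f)$, with its distinguished degree-one part $\mathrm{Jac}(f)_1 \caniso V^{\dual}$, is determined by $(\mA_X, \Phi_{\mA_X})$.

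It remains to reconstruct $X$ from $\mathrm{Jac}(f)$. This is a homogeneous incarnation of the Mather--Yau theorem, equivalently Donagi's symmetrizer lemma: for $d \ge 3$ a smooth hypersurface $V(f) \subset \P(V)$ is determined up to projective equivalence by $\mathrm{Jac}(f)$ as a graded ring. Indeed, the degree-$(d-1)$ part of the Jacobian ideal sits inside $\mathrm{Sym}^{d-1} V^{\dual} = \mathrm{Sym}^{d-1}\!\bigl(\mathrm{Jac}(f)_1\bigr)$ as the linear span of $\partial_0 f, \dots, \partial_{n+1} f$, and $f$ is then recovered from this span, up to a scalar, by integrating the Euler identity $d\,f = \sum_i x_i\,\partial_i f$ (this is where characteristic zero enters). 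The residual cases $d \le 2$ are harmless: then $\mA_X$ is either zero or generated by one or two exceptional objects, while a smooth hypersurface of degree at most $2$ and fixed dimension is unique up to projective equivalence. Combining the three steps proves the theorem.

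The step I expect to be the crux is the middle one: establishing the bigraded \emph{ring} isomorphism $\bigoplus_j \HH^\bullet(\mA_X;\,\Phi_{\mA_X}^{\,j}) \caniso \mathrm{Jac}(f)$, and not merely an isomorphism of bigraded vector spaces. One must track the internal weight through the Hochschild complex of the graded matrix-factorization category, match it and the cohomological degree against $\mathrm{Jac}(f)$ with the correct global shift, and verify that the Yoneda-type product coming from composition of natural transformations is the one on $\mathrm{Jac}(f)$; only the honest ring structure feeds into the Mather--Yau step, so an associated-graded or vector-space statement would not suffice. A subsidiary nuisance is fixing once and for all the cohomological shift relating $\Phi_{\mA_X}$ to $(1)$ under Orlov's equivalence, since leaving it ambiguous would obscure the internal grading one reads off of the twisted Hochschild groups.
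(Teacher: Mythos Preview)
This theorem is not proved in the paper; it is quoted from Huybrechts--Rennemo, and your outline is essentially their argument: pass to $\mathrm{MF}^{gr}(f)$ via Orlov, identify the bigraded algebra $\bigoplus_j \HH^\bullet(\mA_X;\Phi_{\mA_X}^{\,j})$ with the Jacobian ring, and apply Mather--Yau. The paper's own contribution is the generalization in Theorem~\ref{thm:main theorem}, and the method there is genuinely different from yours. It never leaves $\Dbcoh(X)$ and never computes the full twisted Hochschild algebra. It uses only that $\Hom(P_0,P_1)\cong V^\dual$ (Lemma~\ref{lem:multiplication on kuznetsov component}) and that the $d$-fold composition $S^d V^\dual \to \Hom(P_0,P_d)\cong\HH^2(\mA_X)$ factors through $\HH^2(X)$ via the universal linkage class (Theorem~\ref{thm:kuznetsov periodicity}). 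The kernel of this single map is then identified with the degree-$d$ part of the Jacobian ideal using only the injection $H^1(X,T_X)\hookrightarrow\HH^2(\mA_X)$ of Lemma~\ref{lem:action is nondegenerate if non-diagonal}, established by a short Hodge-theoretic nondegeneracy argument that deliberately avoids computing $\HH^2(\mA_X)$. Your route buys the whole graded ring $J^\bullet(f)$ at the price of a substantial Hochschild computation on the matrix-factorization side; the paper's route buys only the single graded piece $J^d(f)$ but needs almost nothing, and that economy is exactly what lets it drop the divisibility hypothesis.

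One point in your write-up is off and worth fixing. The periodicity $(d)\cong[2]$ on $\mathrm{MF}^{gr}(f)$, equivalently $\Phi_{\mA_X}^{\,d}\cong[2]$, holds for \emph{every} Fano hypersurface (this is Theorem~\ref{thm:kuznetsov periodicity}) and has nothing to do with $d\mid(n+2)$. The divisibility condition is what makes the \emph{Serre functor} of $\mA_X$ a pure shift, which is a separate statement. So your parenthetical explanation of why $d\mid(n+2)$ enters is not right, and as written your middle paragraph does not visibly use the hypothesis at all: if the bigraded ring isomorphism really held unconditionally you would have reproved Theorem~\ref{thm:main theorem}, not just the cited corollary. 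Either the divisibility enters the Hochschild computation in a way you have not yet isolated, or you are claiming more than you need. Pinning this down is precisely the ``crux'' you flagged at the end.
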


In this paper we generalize their approach to any Fano hypersurface, not necessarily satisfying the divisibility condition on $d$ and $n$:

\begin{theorem}[{ = Theorem~\ref{thm:main theorem}}]
  \label{thm:main theorem intro}
  A smooth Fano hypersurface $X \subset \P(V)$ of degree $d$ and dimension $n$ satisfying~$d > 3$ or~$(d = 3, n > 3)$ is determined by the pair $(\mA_X, \Phi_{\mA_X})$ composed of the Kuznetsov component and the rotation functor (as a dg-category and a dg-endofunctor).
\end{theorem}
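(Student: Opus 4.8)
The plan is to recover from the pair $(\mA_X, \Phi_{\mA_X})$ the entire derived category $\Dbcoh(X)$ \emph{together with} the polarization $\O_X(1)$, and then to invoke the Bondal--Orlov reconstruction theorem; the new contribution over \cite{huybrechts-rennemo} is to carry this out without assuming $d \mid (n+2)$. Throughout one works with dg-enhancements, as the reconstruction statements require.

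First I would set up the dictionary with matrix factorizations. Let $R = \mathrm{Sym}(V^\dual)/(f)$ be the homogeneous coordinate ring of $X$ and $a = n + 2 - d \ge 1$ its Gorenstein parameter. Orlov's theorem provides an equivalence $\mA_X \iso \mathrm{D}^{\mathrm{gr}}_{\mathrm{sg}}(R)$ with the graded singularity category of $R$, compatible with the semiorthogonal decomposition \eqref{eq:intro sod}; under this equivalence the rotation functor $\Phi_{\mA_X}$ becomes the internal grading shift $M \mapsto M(1)$ up to a homological shift which is itself intrinsic. In particular $\Phi_{\mA_X}^{\,d}$ is isomorphic to a shift functor (a shadow of the two-periodicity of matrix factorizations), and the Serre functor $S_{\mA_X}$ is expressible through $\Phi_{\mA_X}$ and $[1]$ as dictated by $\omega_X = \O_X(-a)$ and $\dim X = n$. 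The problem is thereby reduced to recovering $X = \mathrm{Proj}\, R$ from $(\mathrm{D}^{\mathrm{gr}}_{\mathrm{sg}}(R), (1))$.

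Next, the core step: reconstruct $\Dbcoh(X)$ and the twist $T = (-\otimes\O_X(1))$. The functor $T$ is an autoequivalence of $\Dbcoh(X)$ that cyclically rotates \eqref{eq:intro sod} — it carries $\O_X(i)$ to $\O_X(i+1)$, and the whole discrepancy between $T$ and the decomposition is encoded by the rotation functor via $\Phi_{\mA_X} = \mathbb{L}_{\langle \O_X, \ldots, \O_X(n-d+1)\rangle} \circ T|_{\mA_X}$. Using this, one assembles from $(\mA_X, \Phi_{\mA_X})$ a dg-category $\widetilde{\mathcal C}$ with an autoequivalence $\widetilde T$, by gluing $\mA_X$ to a chain of exceptional objects along the bimodule determined by the iterates of $\Phi_{\mA_X}$, and proves $(\widetilde{\mathcal C}, \widetilde T) \iso (\Dbcoh(X), T)$. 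Two things must be checked: (i) the exceptional object $\O_X$ is singled out inside $\widetilde{\mathcal C}$ intrinsically — for instance as a generator of the subcategory right-orthogonal to $\mA_X$, of the correct numerical class; and (ii) the graded ring $\bigoplus_{e \ge 0} \Hom_{\widetilde{\mathcal C}}(\O_X, \widetilde T^{\,e}\O_X)$ is recovered. Its degree-$e$ piece is $R_e = H^0(X, \O_X(e))$: for $e < d$ this equals $\mathrm{Sym}^e(V^\dual)$ and carries no information about $f$, while for $e \ge d$ its dependence on $f$ is precisely the gluing data carried by $\Phi_{\mA_X}^{\,e}$. Together (i) and (ii) recover $R$ as a graded ring, equivalently $\Dbcoh(X)$ with the ample class $\O_X(1)$.

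Finally, since $\omega_X^{-1} = \O_X(a)$ is ample, the Bondal--Orlov theorem applies: if $(\mA_X, \Phi_{\mA_X}) \iso (\mA_{X'}, \Phi_{\mA_{X'}})$ for another smooth Fano hypersurface $X'$ of the same degree and dimension, then running the reconstruction above on both sides yields $\Dbcoh(X) \iso \Dbcoh(X')$, hence $X \iso X'$. I expect the main obstacle to be step (ii) in the absence of the divisibility hypothesis. When $d \mid (n+2)$ the Serre functor $S_{\mA_X}$ is a clean power of $\Phi_{\mA_X}$ composed with a shift, which makes the de-periodization transparent in \cite{huybrechts-rennemo}; in general one must instead work with the full group of autoequivalences of $\mA_X$ generated by $\Phi_{\mA_X}$, $[1]$ and $S_{\mA_X}$, using that $\Phi_{\mA_X}^{\,d}$ is a homological shift, in order to still pin down $\O_X$ and the multiplication maps $R_e \otimes R_{e'} \to R_{e+e'}$. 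This is where the hypothesis $d > 3$, or $d = 3$ with $n > 3$, should enter: it guarantees that $\mA_X$ is nonzero and sufficiently large and that the generators $R_1 = V^\dual$ of the coordinate ring are determined unambiguously by the numerical data — something that genuinely fails, or at least is not reachable by this method, for cubic surfaces and cubic threefolds.
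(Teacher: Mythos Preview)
Your proposal takes a route that is quite different from the paper's, and the crucial step is not actually carried out. You aim to rebuild $\Dbcoh(X)$ together with the twist $T=(-\otimes\O_X(1))$ by gluing $\mA_X$ to a string of exceptional objects ``along the bimodule determined by the iterates of $\Phi_{\mA_X}$'', and then appeal to Bondal--Orlov. But the gluing bimodule in the semiorthogonal decomposition~\eqref{eq:intro sod} is the functor $A\mapsto \RHom_X(\O_X(i),A)$ for $A\in\mA_X$, and nothing in your argument explains how this is read off from $(\mA_X,\Phi_{\mA_X})$. The rotation functor is $\Phi_{\mA_X}=\mathbb L_{\langle\O_X,\ldots,\O_X(n-d+1)\rangle}\circ T$ on $\mA_X$; it has already absorbed (and thus lost) precisely the projection data you need. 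Your step~(ii) --- recovering $R_e$ for $e\ge d$ and the multiplications $R_e\otimes R_{e'}\to R_{e+e'}$ --- is only asserted (``is precisely the gluing data carried by $\Phi_{\mA_X}^{\,e}$''), and your last paragraph concedes this is the main obstacle without offering a mechanism to overcome it. Working with the group generated by $\Phi_{\mA_X}$, $[1]$ and $S_{\mA_X}$ does not obviously help: all three are intrinsic to $\mA_X$ and cannot, by themselves, manufacture the missing $\RHom$'s to the external exceptional objects.

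The paper avoids this entirely and never reconstructs $\Dbcoh(X)$. Instead it computes, purely inside $(\mA_X,\Phi_{\mA_X})$, the kernel of the $d$-fold composition map
\[
S^dV^\dual\;\longrightarrow\;\Hom(P_0,P_1)^{\otimes d}\;\longrightarrow\;\Hom(P_0,P_d)\;\xrightarrow{\ \varphi_{P_d}\ }\;\Ext^2(P_0,P_0)=\HH^2(\mA_X),
\]
using that $\Hom(P_0,P_1)\iso V^\dual$ (Lemma~\ref{lem:multiplication on kuznetsov component}) and $P_d\iso P_0[2]$ (Theorem~\ref{thm:kuznetsov periodicity}). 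The point is that this composite factors through $S^dV^\dual\to H^1(X,T_X)\hookrightarrow\HH^2(X)\to\HH^2(\mA_X)$, and the last map is shown to be injective on $H^1(X,T_X)$ by a Hochschild-homology argument exploiting the non-diagonality of the Hodge diamond (Lemmas~\ref{lem:nondiagonal diamonds for hypersurfaces} and~\ref{lem:action is nondegenerate if non-diagonal}). Hence the kernel is exactly the degree-$d$ piece of the Jacobian ideal, and Mather--Yau finishes. The hypothesis $d>3$ or $(d=3,n>3)$ enters only to ensure $\Hom(P_0,P_1)\iso V^\dual$ and that the Hodge diamond is non-diagonal --- not, as you speculate, to pin down $\O_X$ or multiplication maps in $R$.
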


\begin{remark}
  The case of cubic threefolds, not included in this theorem, has been studied in~\cite{bmms}. For this case the category $\mA_X$ alone suffices to determine $X$.
\end{remark}

The strategy of the proof is similar to the one in \cite{huybrechts-rennemo}. The vector space of natural transformations from the identity functor of $\mA_X$ to the rotation functor is isomorphic to~$V^\dual$~(Lemma~\ref{lem:multiplication on kuznetsov component}).  The~$d$'th power of the rotation functor is a shift-by-two functor on~$\mA_X$~(Theorem~\ref{thm:kuznetsov periodicity}), and thus the~$d$-fold composition of the natural transformations defines a morphism
\begin{equation}
  \label{eq:intro map}
  S^d V^\dual \to \HH^2(\mA_X).
\end{equation}
We show that the kernel of this morphism is exactly the $d$'th graded component of the Jacobian ideal of $X \subset \P(V)$, which determines the hypersurface~$X$ uniquely. A subtle point of the argument is that we do not need an explicit computation of~$\HH^2(\mA_X)$. Instead, we prove that the map~\eqref{eq:intro map} factors through $\HH^2(X)$ and use an easy observation from Lemma~\ref{lem:action is nondegenerate if non-diagonal} that shows that the restriction map $\HH^2(X) \to \HH^2(\mA_X)$ is an injection on a large subspace of $\HH^2(X)$.

There are many classes of varieties which admit semiorthogonal decompositions similar to the one in \eqref{eq:intro sod} in the sense that one of the components of the decomposition is the "interesting" one, and the others are very simple. In these situations one could investigate some refined version of the Bondal--Orlov's theorem. For a review of known results along this direction, see \cite{pertusi-stellari}. We especially remark the Fano threefold case studied in \cite{infinitesimal-torelli}, due to some similarities with our approach.

While preparing the work I learned of an upcoming paper by J.~Rennemo~\cite{rennemo-reconstruction}, who showed that if $d$ does not divide $n+2$ and the pair $(d, n)$ is not of the form $(4, 4k)$, then the Kuznetsov component alone suffices to reconstruct $X$, with no dependency on the rotation functor. I believe that Theorem~\ref{thm:main theorem intro}, though weaker outside of the case $(d = 4, n = 4k)$, is still of interest, in particular due to the uniform handling of all cases.

\textbf{Structure of the paper}. In Section~\ref{sec:jacobian ring} we recall the notion of the Jacobian ring of a hypersurface and its connection with Hochschild cohomology. In Section~\ref{sec:orthogonal to structure sheaf}, following \cite{kuznetsov-v14}, we perform some computations related to the orthogonal to the structure sheaf in the derived category of a hypersurface. We use those results in Section~\ref{sec:kuznetsov component} to study the $d$'th power of the rotation functor on the Kuznetsov component. Finally, in Section~\ref{sec:injectivity on hochschild} we prove the main Theorem~\ref{thm:main theorem intro}.

\textbf{Notation}. Let $K$ be a field of characteristic zero. In this paper all categories are assumed to be triangulated and $K$-linear, all pullbacks and pushforwards are assumed to be derived, and all varieties are assumed to be smooth.

Let $V$ be an $(n+2)$-dimensional vector space over $K$.
We work with a smooth $n$-dimensional Fano hypersurface $X \subset \P(V)$ with $\mathrm{deg}(X) = d$, defined by an equation $f \in S^d V^\dual$. In particular, $d < n+2$.

We use the notation $\P$ for the projective space $\P(V)$. Since we mostly work with objects in the derived categories $\Dbcoh(X)$ and $\Dbcoh(X \times X)$, we sometimes omit the subscript $X$ on objects like the structure sheaf of the diagonal $\O_\Delta \in \Dbcoh(X \times X)$ to avoid the symbol clutter in formulas when the risk of confusion is small.

\textbf{Acknowledgements}. I thank Emanuele Macr\`i for many helpful conversations. I thank Alexander Kuznetsov for advice and suggestions. I also thank Daniel Huybrechts and J\o rgen Rennemo for their explanations concerning Lemma~\ref{lem:multiplication on kuznetsov component}, and for letting me know of \cite{rennemo-reconstruction}.

\section{Hochschild cohomology and Jacobian ring}
\label{sec:jacobian ring}
In \cite{donagi} Ron Donagi proved a Hodge-theoretic Torelli theorem for a (very general) hypersurface satisfying some conditions on the degree and the dimension. The proof relied on the notion of the Jacobian ring of a hypersurface. We also need this notion for the categorical version of Torelli theorem.

\begin{definition}
  \label{def:jacobian ring}
  The \emph{Jacobian ring} of a hypersurface $X \subset \P(V)$ defined by an equation~$f \in S^d V^\dual$ is the graded ring given by the quotient
  \[
    J^\bullet(f) := S^\bullet V^\dual/\langle \tfrac{\partial f}{\partial v} \rangle_{v \in V}.
  \]
  The graded ideal generated by the partial derivatives of $f$ is called the \emph{Jacobian ideal} of $X$.
\end{definition}

What most interests us is a relation between the Jacobian ring and the Hochschild cohomology of the hypersurface. One aspect of this relation is demonstrated in the combination of Lemma~\ref{lem:h1t and jacobian ideal} and Proposition~\ref{prop:hh2 and jacobian ideal}. For general information on Hochschild cohomology see, for example, \cite{kuznetsov-oldhochschild}. We only recall that Hochschild cohomology of a variety can be computed by the formula
\[
  \HH^\bullet(X) = \Ext^\bullet_{X \times X}(\O_\Delta, \O_\Delta),
\]
and the Hochschild cohomology of an (admissible) subcategory $\mA_X \subset \Dbcoh(X)$ can be computed by the formula
\begin{equation}
  \label{eq:hochschild definition subcategory}
  \HH^\bullet(\mA_X) = \Ext^\bullet_{X \times X}(P, P),
\end{equation}
where the object~$P \in \Dbcoh(X \times X)$ is a Fourier--Mukai kernel of the (left or right) projection functor to~$\mA_X$.

\begin{lemma}
  \label{lem:h1t and jacobian ideal}
  Consider the normal bundle short exact sequence:
  \[
    0 \to T_X \to T_{\P}|_X \to \O_X(d) \to 0.
  \]
  The connecting homomorphism in the long exact sequence of cohomology groups induces a morphism
  \[
    S^d V^\dual \iso H^0(\P, \O_{\P}(d)) \to H^0(X, \O_X(d)) \to H^1(X, T_X).
  \]
  Then this map is surjective and it identifies $H^1(X, T_X)$ with the $d$'th graded component of the Jacobian ring $J^d(f)$.
\end{lemma}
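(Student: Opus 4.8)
The plan is to compare the normal bundle sequence with the Euler sequence restricted to $X$ and then make all maps explicit; this is essentially the Griffiths--Donagi description of $H^1(X, T_X)$ via the Jacobian ring. First I would restrict the Euler sequence $0 \to \O_\P \to V \otimes \O_\P(1) \to T_\P \to 0$ to $X$; as $T_\P$ is locally free this stays exact, giving $0 \to \O_X \to V \otimes \O_X(1) \to T_\P|_X \to 0$. Throughout I will use the vanishings $H^1(X, \O_X) = H^2(X, \O_X) = 0$ and $H^i(X, \O_X(1)) = 0$ for $i = 1, 2$, as well as $H^0(X, \O_X(1)) = V^\dual$ (valid since $d \geq 2$) and $H^0(X, \O_X(d)) = S^d V^\dual/K\!\cdot\!f$; all of these follow from the cohomology of line bundles on $\P$ via the sequence $0 \to \O_\P(m - d) \to \O_\P(m) \to \O_X(m) \to 0$ (all hypersurfaces occurring in the main theorem have $n \geq 3$, so the needed groups on $\P$ vanish).

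From the long exact sequence of the restricted Euler sequence, the map $H^0(X, V \otimes \O_X(1)) \to H^0(X, T_\P|_X)$ is surjective and $H^1(X, T_\P|_X) = 0$. Plugging the latter into the long exact sequence of $0 \to T_X \to T_\P|_X \to \O_X(d) \to 0$, the connecting homomorphism $\delta\colon H^0(X, \O_X(d)) \to H^1(X, T_X)$ is surjective with kernel the image of $\alpha\colon H^0(X, T_\P|_X) \to H^0(X, \O_X(d))$. Since the restriction $\mathrm{res}\colon S^d V^\dual = H^0(\P, \O_\P(d)) \to H^0(X, \O_X(d))$ is surjective with kernel $K\!\cdot\!f$, the map $\mu := \delta \circ \mathrm{res}$ of the lemma is surjective, and $\ker\mu = \mathrm{res}^{-1}(\im\alpha)$; it remains to identify $\im\alpha$.

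The key point is that the composite $V \otimes \O_X(1) \to T_\P|_X \xrightarrow{q} \O_X(d)$, with $q$ the normal-bundle quotient, equals the restriction to $X$ of the morphism $V \otimes \O_\P(1) \to \O_\P(d)$ sending $v \otimes s$ to $s \cdot \partial f/\partial v$. Indeed, on $X$ this morphism kills the Euler subsheaf $\O_X \hookrightarrow V \otimes \O_X(1)$: that subsheaf corresponds to $\mathrm{id}_V \in V \otimes V^\dual$, which maps to $\sum_i x_i\,\partial f/\partial x_i = d \cdot f$, vanishing on $X$; hence the morphism descends to a map $T_\P|_X \to \O_X(d)$ that one checks is $q$, using that $q$ sends a tangent vector $\xi$ to $\xi(f)$ (the conormal $N^\dual_{X/\P} \subset \Omega_\P|_X$ being generated by $df$). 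Since $H^0(X, V \otimes \O_X(1)) \to H^0(X, T_\P|_X)$ is surjective, $\im\alpha$ is the image of the induced map $V \otimes V^\dual = H^0(X, V \otimes \O_X(1)) \to H^0(X, \O_X(d))$, $v \otimes \ell \mapsto (\ell\,\partial f/\partial v)|_X$, which is exactly the image in $H^0(X, \O_X(d)) = S^d V^\dual/K\!\cdot\!f$ of the degree-$d$ component of the Jacobian ideal $\langle \partial f/\partial v\rangle_{v \in V}$. As Euler's identity places $f$ in that ideal (characteristic zero), so that $K\!\cdot\!f$ lies in its degree-$d$ component, we get that $\ker\mu = \mathrm{res}^{-1}(\im\alpha)$ is precisely the degree-$d$ component of $\langle\partial f/\partial v\rangle_{v\in V}$; hence $\mu$ induces an isomorphism $J^d(f) \isoarrow H^1(X, T_X)$.

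The only real obstacle I anticipate is organizational: matching the composite of geometric maps with the algebraic "multiplication by partial derivatives" map and verifying that it factors through $T_\P|_X$ via Euler's identity, together with checking the handful of line-bundle cohomology vanishings on $X$ invoked above.
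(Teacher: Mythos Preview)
Your proof is correct and follows essentially the same route as the paper: both use the restricted Euler sequence to show $H^1(X,T_\P|_X)=0$ and to identify the map $H^0(T_\P|_X)\to H^0(\O_X(d))$ with $\xi\otimes v\mapsto \xi\cdot\partial f/\partial v$, then invoke Euler's identity to conclude. Your write-up is more explicit about the line-bundle vanishings and about why the composite $V\otimes\O_X(1)\to\O_X(d)$ descends through $T_\P|_X$; the only minor remark is that you justify the vanishings via $n\geq 3$, whereas the Fano hypothesis $d<n+2$ (the paper's standing assumption) already suffices and is what the paper invokes.
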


\begin{proof}
  The long exact sequence of cohomology groups of the normal short exact sequence contains the following fragment:
  \begin{equation}
    \label{eq:cohomology of normal exact sequence}
    H^0(T_{\P}|_X) \to H^0(\O_X(d)) \to H^1(T_X) \to H^1(T_\P|_X).
  \end{equation}
  
  Note that the tangent bundle of $\P$ fits into the Euler short exact sequence
  \[
    0 \to \O_\P \to V \otimes \O_\P(1) \to T_\P \to 0.
  \]
  Since $d < n+2$ it is easy to compute that $H^1(X, T_\P|_X) = 0$ and $H^0(X, T_\P|_X) = V^\dual \otimes V/\langle \mathrm{id}_V \rangle$. Thus the sequence \eqref{eq:cohomology of normal exact sequence} simplifies:
  \[
    V^\dual \otimes V/\langle \mathrm{id}_V \rangle
    \to
    S^d V^\dual / \langle f \rangle
    \to
    H^1(T_X) \to 0.
  \]
  Here the first map sends a decomposable tensor~$\xi \otimes v \in V^\dual \otimes V$  to the element~$\xi \cdot \frac{\partial f}{\partial v} \in S^d V^\dual$. The image of this map in $S^d V^\dual$ is thus the $d$'th component of the Jacobian ideal. Since $f$ lies in its own Jacobian ideal, we conclude that $H^1(X, T_X) \iso J^d(f)$.
\end{proof}

Recall the notion of the \emph{universal Atiyah class} $\mathrm{At} \in \Ext^1(\O_\Delta, \Delta_*\Omega^1_X)$ (see, e.g., \cite{kuznetsov-markushevich}).
Abusing the notation, we denote the following composition also by $\mathrm{At}$:
\[
  H^1(X, T_X) \iso \Ext_X^1(\Omega^1_X, \O_X) \xrightarrow{\Delta_*} \Ext^1_{X \times X}(\Delta_*\Omega^1_X, \O_\Delta) \xrightarrow{- \circ \mathrm{At}} \Ext^2_{X \times X}(\O_\Delta, \O_\Delta) \caniso \HH^2(X).
\]
By the Hochschild--Kostant--Rosenberg theorem (e.g., \cite[Cor.~2.6]{swan-hkr}) this morphism is an injection.

Recall the (universal) linkage class $\epsilon_X$ for a hypersurface $X \subset \P(V)$ \cite[Sec.~3]{kuznetsov-markushevich}: the derived restriction of $\O_{\Delta_\P}$ to $X \times X \subset \P \times \P$ is a complex with two adjacent cohomology sheaves. Thus it fits into a distinguished triangle
\begin{equation}
  \label{eq:universal linkage}
  \O_{\Delta_\P}|_{X \times X} \to \O_\Delta \xrightarrow{\epsilon_X} \O_\Delta(-d)[2].
\end{equation}
The gluing morphism~$\epsilon_X \in \Ext^2_{X \times X}(\O_\Delta, \O_\Delta(-d))$ between the cohomology sheaves is called the \emph{universal linkage class}.

\begin{proposition}
  \label{prop:hh2 and jacobian ideal}
  The composition with the universal linkage class defines a morphism:
  \[
    S^d V^\dual \epiarrow H^0(X, \O_X(d)) \xrightarrow{\Delta_*} \Hom_{X \times X}(\O_\Delta, \O_\Delta(d)) \xrightarrow{\epsilon_X \circ -} \Ext^2_{X \times X}(\O_\Delta, \O_\Delta) \caniso \HH^2(X).
  \]
  such that the following triangle commutes:
  \begin{equation}\begin{tikzcd}
      \label{eq:hh2 and jacobian ideal}
      {S^d V^\dual} & {H^1(X, T_X)} \\
      & {\HH^2(X)}
      \arrow["{\mathrm{At}}", hook', from=1-2, to=2-2]
      \arrow[from=1-1, to=1-2]
      \arrow[from=1-1, to=2-2]
    \end{tikzcd}\end{equation}
  where the horizontal arrow is the map from Lemma~\textup{\ref{lem:h1t and jacobian ideal}}.
\end{proposition}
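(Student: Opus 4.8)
Throughout write $\kappa\colon H^0(X,\O_X(d))\to H^1(X,T_X)$ for the map of Lemma~\ref{lem:h1t and jacobian ideal}. The first arrow $S^d V^\dual\epiarrow H^0(X,\O_X(d))$ in the statement is the restriction map (surjective since $H^1(\P,\O_\P)=0$), the second is the isomorphism $\Hom_{X\times X}(\O_\Delta,\O_\Delta(d))\caniso H^0(X,\O_X(d))$ given by $\Delta_*$ --- in cohomological degree $0$ only the summand $\O_X$ of $L\Delta^*\O_\Delta\caniso\bigoplus_i\Omega^i_X[i]$ contributes, as $\Ext^{-i}_X(\Omega^i_X,\O_X(d))=H^{-i}(X,\wedge^i T_X(d))=0$ for $i>0$ --- and the third is composition with the twist $\epsilon_X(d)\colon\O_\Delta(d)\to\O_\Delta[2]$ of the universal linkage class. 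So the content of the proposition is that, for every $s\in H^0(X,\O_X(d))$, the two resulting classes in $\Ext^2_{X\times X}(\O_\Delta,\O_\Delta)=\HH^2(X)$ agree: namely $\mathrm{At}(\kappa(s))$ and $\epsilon_X(d)\circ\Delta_*(s)$.

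I would first remove the dependence on $s$. Dualizing the normal bundle sequence of Lemma~\ref{lem:h1t and jacobian ideal} gives the conormal sequence $0\to\O_X(-d)\to\Omega^1_\P|_X\to\Omega^1_X\to0$; let $e\in\Ext^1_X(\Omega^1_X,\O_X(-d))$ be its class. Because a connecting homomorphism in $\Ext^\bullet_X(\O_X,-)$ is cup product with the extension class, and because the isomorphism $\Ext^1_X(\O_X(d),T_X)\caniso\Ext^1_X(\Omega^1_X,\O_X(-d))$ matches the class of the normal sequence with $e$ (up to a sign to be tracked), one obtains $\kappa(s)=s\circ e$ in $\Ext^1_X(\Omega^1_X,\O_X)$, where $s$ is read as a map $\O_X(-d)\to\O_X$. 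Unwinding the definition of $\mathrm{At}$ recalled in the excerpt, this yields
\[
  \mathrm{At}(\kappa(s))=\Delta_*(s)\circ\Delta_*(e)\circ\mathrm{At}
\]
with $\mathrm{At}\colon\O_\Delta\to\Delta_*\Omega^1_X[1]$, $\Delta_*(e)\colon\Delta_*\Omega^1_X\to\O_\Delta(-d)[1]$, and $\Delta_*(s)\colon\O_\Delta(-d)\to\O_\Delta$. On the other side, $\epsilon_X$ is a morphism in $\Dbcoh(X\times X)$, hence commutes with $-\otimes p_1^*\O_X(d)$ and with multiplication by the section $\Delta_*(s)$; this makes the square with rows $\epsilon_X$ and $\epsilon_X(d)$ and vertical maps $\Delta_*(s)$ commute, so $\epsilon_X(d)\circ\Delta_*(s)=\Delta_*(s)\circ\epsilon_X$. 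Therefore the proposition reduces to the single identity
\[
  \epsilon_X=\Delta_*(e)\circ\mathrm{At}\qquad\text{in }\Ext^2_{X\times X}(\O_\Delta,\O_\Delta(-d)).
\]

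To establish this identity I would realize both sides as connecting maps of explicit exact triangles and match them termwise. The Atiyah class $\mathrm{At}$ is the extension class of $0\to\Delta_*\Omega^1_X\to\O_{X\times X}/\mathcal I_\Delta^2\to\O_\Delta\to0$, i.e.\ it records the first infinitesimal neighbourhood of the diagonal of $X$; there is the analogous sequence for $\Delta_\P\subset\P\times\P$. On the other hand $\epsilon_X$ is, by definition, the connecting map of the derived restriction $\O_{\Delta_\P}|_{X\times X}$; a Koszul computation along the regular embedding $X\times X\hookrightarrow\P\times\P$ identifies this object with a complex whose only nonzero cohomology sheaves are $\O_\Delta$ in degree $0$ and $\O_\Delta(-d)$ in degree $-1$, namely the two terms of \eqref{eq:universal linkage}. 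Restricting the first infinitesimal neighbourhood of $\Delta_\P$ to $X\times X$ and feeding in the conormal sequence of $X\subset\P$ --- concretely, keeping track of how the equation $f$ (entering through the Koszul differentials of $X\times X\subset\P\times\P$) and the differential $df$ (entering through the diagonal of $\P$) appear --- exhibits this connecting map as $\Delta_*(e)\circ\mathrm{At}$. This comparison of boundary homomorphisms, together with the bookkeeping of twists and the residual sign from $\kappa(s)=s\circ e$, is the only step that is not formal; it is in substance the computation of \cite[Sec.~3]{kuznetsov-markushevich}, which I would invoke in the form just stated or reprove by the indicated diagram chase.
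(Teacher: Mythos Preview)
Your proposal is correct and follows essentially the same route as the paper: both reduce the commutativity of the triangle to the identity $\epsilon_X=\Delta_*(e)\circ\mathrm{At}$ (the paper writes $e$ as $\nu$), cite \cite[Th.~3.2]{kuznetsov-markushevich} for it, and dispatch the remaining dependence on $s$ by the naturality of multiplication by a global section. The paper is slightly terser---it packages your two reductions into a single commuting square---but the content is the same.
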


\begin{proof}
  Let $\nu\colon \Omega^1 \to \O(-d)[1]$ be the extension class of the conormal exact sequence
  \[
    0 \to \O(-d) \to \Omega^1_\P|_X \to \Omega^1_X \to 0.
  \]
  By \cite[Th.~3.2]{kuznetsov-markushevich} the universal linkage class is equal to the composition
  \[
    \O_\Delta \xrightarrow{\mathrm{At}} \Delta_*\Omega^1[1] \xrightarrow{\Delta_*\nu} \O_\Delta(-d)[2]
  \]
  of the universal Atiyah class and the pushforward of $\nu$ along the diagonal.
  Unwinding the definitions, we see that the diagram~\eqref{eq:hh2 and jacobian ideal} commutes on an element $g \in S^d V^\dual$ if and only if the diagram below commutes:
  \[\begin{tikzcd}
      {\O_\Delta} & {\Delta_* \Omega^1[1]} & {\O_\Delta(-d)[2]} \\
      {\O_\Delta(d)} & {\Delta_*\Omega^1(d)[1]} & {\O_\Delta[2]}
      \arrow["{\mathrm{At}}", from=1-1, to=1-2]
      \arrow["{\Delta_*\nu}", from=1-2, to=1-3]
      \arrow["{\mathrm{At}(d)}", from=2-1, to=2-2]
      \arrow["{\Delta_*\nu(d)}", from=2-2, to=2-3]
      \arrow["{\cdot g}", from=1-3, to=2-3]
      \arrow["{\cdot g}", from=1-1, to=2-1]
    \end{tikzcd}\]
  This commutativity is clear since multiplication by $g$ is a natural transformation.
\end{proof}

\section{The orthogonal to the structure sheaf}
\label{sec:orthogonal to structure sheaf}

Since $X \subset \P(V)$ is by assumption a Fano hypersurface, the structure sheaf $\O_X \in \Dbcoh(X)$ is an exceptional object. In this section we study some properties of the right-orthogonal subcategory $\O_X^\perp \subset \Dbcoh(X)$. The goal is to perform some explicit computations involving the projection functor to $\O_X^\perp$. We will rely on them in Section~\ref{sec:kuznetsov component} where we study the Kuznetsov component $\mA_X \subset \Dbcoh(X)$. Nothing in this section is new, all results are already in \cite{kuznetsov-v14}.

We begin with a brief reminder on Fourier--Mukai transforms to fix the notation. For details, see \cite{HuybFM}.
Given an object $K \in \Dbcoh(X \times X)$ we define the Fourier--Mukai functor~$\Phi_K\colon \Dbcoh(X) \to \Dbcoh(X)$ by the formula $\pi_{2 *}(\pi_1^*(-) \otimes K)$.  For any $d \in \Z$ the Fourier--Mukai transform $\Phi_{\O_{\Delta}(d)}$ is the functor given by the twist by $\O_X(d)$. For any pair of objects~$F, G \in \Dbcoh(X)$ the Fourier--Mukai transform along the exterior product $F \boxtimes G$ is the functor
\[
  \Phi_{F \boxtimes G}(-) := \RGamma(- \otimes F) \otimes G.
\]
The \emph{convolution} of two kernels $K_1, K_2 \in \Dbcoh(X \times X)$ is defined by the formula
\[
  K_1 \circ K_2 := \pi_{1 3 *}(\pi_{1 2}^*(K_1) \otimes \pi_{2 3}^*(K_2)),
\]
and it satisfies $\Phi_{K_1 \circ K_2} = \Phi_{K_1} \circ \Phi_{K_2}$. For line bundles of the form~$\O_X(a, b) := \O_X(a) \boxtimes \O_X(b)$ the convolution equals
\begin{equation}
  \label{eq:convolution of line bundles}
  \O_X(a, b) \circ \O_X(a^\prime, b^\prime) \caniso \RGamma(\O_X(b + a^\prime)) \otimes \O_X(a, b^\prime).
\end{equation}

Now we return to the main object of this section.

\begin{definition}
  \label{def:kernels for rotations}
  We define the object $Q_0 \in \Dbcoh(X \times X)$ as the complex
  \[
    Q_0 := [ \O_X \boxtimes \O_X \to \O_\Delta ],
  \]
  in degrees $-1$ and $0$, representing the left projection functor to the subcategory $\O_X^\perp \subset \Dbcoh(X)$. We define the objects $Q_i$ for $i > 0$ recursively:
  \[
    Q_i := Q_{i-1} \circ \O_\Delta(1) \circ Q_0,
  \]
  where the symbol $\circ$ denotes the convolution of Fourier--Mukai kernels in $\Dbcoh(X \times X)$.
\end{definition}

\begin{remark}
  The functor $\Dbcoh(X) \to \Dbcoh(X)$ represented by the object $Q_1 \in \Dbcoh(X \times X)$ is called a \emph{rotation functor} in \cite{kuznetsov-v14}. It can alternatively be described as a composition
  \[
    \Dbcoh(X) \xrightarrow{Q_0} \Dbcoh(X) \xrightarrow{- \otimes \O_X(1)} \Dbcoh(X) \xrightarrow{Q_0} \Dbcoh(X),
  \]
  where the Fourier--Mukai transform along the object~$Q_0$ is the left projection to the subcategory~$\O_X^\perp$.
\end{remark}

\begin{lemma}
  \label{lem:multiplication on the subcategory}
  For any $i \geq 0$ there is a natural morphism $\O_\Delta(i) \to Q_i$ which induces a map
  \[
    m_{Q_i}\colon S^i V^\dual \to \Hom(\O_\Delta, \O_\Delta(i)) \to \Hom(\O_\Delta, Q_i) \to \Hom(Q_0, Q_i).
  \]
  Furthermore, for $i = 1$ the map $m_{Q_1}$ is an isomorphism of vector spaces.
\end{lemma}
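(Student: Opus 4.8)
The plan is to construct the morphism $\O_\Delta(i)\to Q_i$ by induction, and then for $i=1$ to compute $\Hom^0_{X\times X}(\O_\Delta, Q_1)$ directly by resolving $Q_1$ and reducing everything to cohomology on $X$. For $i=0$, take for $\O_\Delta\to Q_0$ the morphism $c_0$ appearing in the distinguished triangle $\O_X\boxtimes\O_X\to\O_\Delta\xrightarrow{c_0}Q_0\to[1]$ defining $Q_0$. For $i>0$, assuming $c_{i-1}\colon\O_\Delta(i-1)\to Q_{i-1}$ already built, convolve it with $\mathrm{id}_{\O_\Delta(1)}$ and with $c_0$; since $\O_\Delta(a)\circ\O_\Delta(b)\caniso\O_\Delta(a+b)$ and $\O_\Delta$ is a unit for convolution, this yields $c_i\colon\O_\Delta(i)\caniso\O_\Delta(i-1)\circ\O_\Delta(1)\circ\O_\Delta\to Q_{i-1}\circ\O_\Delta(1)\circ Q_0=Q_i$. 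The three arrows defining $m_{Q_i}$ are then: the composition of $S^iV^\dual\epiarrow H^0(X,\O_X(i))$ with $\Delta_*$; postcomposition with $c_i$; and, for the last one, the inverse of the isomorphism $c_0^*\colon\Hom^\bullet_{X\times X}(Q_0,Q_i)\isoarrow\Hom^\bullet_{X\times X}(\O_\Delta,Q_i)$. That $c_0^*$ is an isomorphism in every degree follows by applying $\Hom_{X\times X}(-,Q_i)$ to the defining triangle of $Q_0$ and noting $\Hom^\bullet_{X\times X}(\O_X\boxtimes\O_X,Q_i)\caniso\Hom^\bullet_X(\O_X,\Phi_{Q_i}(\O_X))=0$, since $\O_X$ is exceptional so $\Phi_{Q_0}(\O_X)=0$, and hence $\Phi_{Q_i}(\O_X)=0$ because $\Phi_{Q_i}$ has $\Phi_{Q_0}$ as its rightmost factor.

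For the last assertion it then suffices to show the composite $V^\dual=S^1V^\dual\to\Hom^0_{X\times X}(\O_\Delta,\O_\Delta(1))\xrightarrow{c_1\circ-}\Hom^0_{X\times X}(\O_\Delta,Q_1)$ is an isomorphism. Its first arrow is one: $S^1V^\dual\to H^0(X,\O_X(1))$ is bijective since $1<d$, and $\Delta_*$ identifies $\Hom^0_X(\O_X,\O_X(1))$ with $\Hom^0_{X\times X}(\O_\Delta,\O_\Delta(1))$ (the higher self-extensions of $\O_\Delta$ twisted by $\O_X(1)$ contribute nothing in degree $0$). So the content is that $c_1\circ-$ is an isomorphism in degree $0$.

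Convolving the defining triangle of $Q_0$ with $\O_\Delta(1)$, once on each side, presents $Q_1=Q_0\circ\O_\Delta(1)\circ Q_0$ as a cone $A\to B\to Q_1\to A[1]$, where $B=Q_0\circ\O_\Delta(1)$ fits in a triangle $\O_X(1)\boxtimes\O_X\to\O_\Delta(1)\xrightarrow{b'}B\to[1]$ and $A=Q_0\circ(\O_X\boxtimes\O_X(1))\caniso\O_X\boxtimes\Phi_{Q_0}(\O_X(1))$; moreover one checks $c_1$ factors as $\O_\Delta(1)\xrightarrow{b'}B\to Q_1$. Since $H^{>0}(X,\O_X(1))=0$ (as $X$ is Fano), the Euler sequence gives $\Phi_{Q_0}(\O_X(1))=\mathrm{Cone}(H^0(X,\O_X(1))\otimes\O_X\to\O_X(1))\caniso\Omega^1_\P(1)|_X[1]$. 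Now use the identity $\Hom^k_{X\times X}(\O_\Delta,F\boxtimes G)\caniso H^{k-n}(X,F\otimes G\otimes\omega_X^{-1})$ together with $\omega_X\caniso\O_X(d-n-2)$, and the vanishings $H^{>0}(X,\O_X(k))=0$ for $k\ge1$ and $H^{>0}(X,\Omega^1_\P(k)|_X)=0$ for $k\ge2$, which one obtains by restricting twisted sheaves from $\P$ and invoking Bott's formula and $d\le n+1$. One finds that $\Hom^\bullet_{X\times X}(\O_\Delta,\O_X(1)\boxtimes\O_X)$ is concentrated in degree $n$ and $\Hom^\bullet_{X\times X}(\O_\Delta,A)$ is concentrated in degree $n-1$. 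Feeding this into the long exact sequences: for $n\ge2$ the triangle for $B$ makes $\Hom^0(\O_\Delta,\O_\Delta(1))\xrightarrow{b'\circ-}\Hom^0(\O_\Delta,B)$ an isomorphism, and for $n\ge3$ the triangle $A\to B\to Q_1$ makes $\Hom^0(\O_\Delta,B)\to\Hom^0(\O_\Delta,Q_1)$ an isomorphism; composing, $c_1\circ-$ is an isomorphism in degree $0$, as wanted. Every hypersurface to which the main theorem applies has $n\ge3$, so this suffices; the remaining low-dimensional cases would require a separate look.

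The step I expect to be the main obstacle is the last one: one has to verify that $c_1$ genuinely factors as $b\circ b'$ through the two triangles and that the induced maps on degree-$0$ cohomology are exactly those occurring in $m_{Q_1}$, and then carry out the (routine but fiddly) Bott-formula bookkeeping for the twists of $\Omega^1_\P|_X$. For a statement valid in all dimensions one additionally needs the injectivity of the connecting homomorphisms out of the off-degree groups $\Hom^{n}_{X\times X}(\O_\Delta,\O_X(1)\boxtimes\O_X)$ and $\Hom^{n-1}_{X\times X}(\O_\Delta,A)$ when $n$ is small.
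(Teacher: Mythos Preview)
Your proposal is correct and follows essentially the same route as the paper. The construction of $\O_\Delta(i)\to Q_i$ by iterated convolution of $c_0$ is exactly what the paper does, and for $i=1$ the paper simply writes down the three-term resolution
\[
  Q_1 \;\simeq\; [\,V^\dual\otimes\O_{X\times X}\to \O(1,0)\oplus\O(0,1)\to \O_\Delta(1)\,]
\]
and declares the check of $m_{Q_1}$ a straightforward computation; your two nested triangles $A\to B\to Q_1$ and $\O_X(1)\boxtimes\O_X\to\O_\Delta(1)\to B$ are precisely the two-step filtration of that same complex (your $A\simeq\O_X\boxtimes\Omega^1_\P(1)|_X[1]$ is the cone on the leftmost differential), so the underlying cohomology bookkeeping is identical. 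Your explicit restriction to $n\ge 3$ is honest and suffices for every application in the paper; the paper's terse ``straightforward computation'' does not spell out the low-dimensional cases either, and handling them would require exactly the kind of differential/connecting-map analysis you flag at the end.
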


\begin{proof}
  By the definition of the left projection functor there is a morphism
  \[
    \O_\Delta \to Q_0.
  \]
  We define the map $\O_\Delta(i) \to Q_i$ by repeatedly using the map $\O_\Delta \to Q_0$ as follows:
  \[\begin{tikzcd}
      {\O_\Delta(i)} & {\O_\Delta \circ \O_\Delta(1) \circ \O_\Delta \circ \cdots \circ \O_\Delta(1) \circ \O_\Delta} \\
      {Q_i} & {Q_0 \circ \O_\Delta(1) \circ Q_0 \circ \cdots \circ \O_\Delta(1) \circ Q_0 }
      \arrow["{=}", from=1-1, to=1-2]
      \arrow["{=}", from=2-1, to=2-2]
      \arrow[from=1-2, to=2-2]
    \end{tikzcd}\]

  The fact that $m_{Q_1}$ is an isomorphism can be checked by a straightforward computation using the resolution
  \[
    Q_1 \caniso [ V^\dual \otimes \O_{X \times X} \to \O(1,0) \oplus \O(0, 1) \to \O_\Delta(1) ]
  \]
  obtained by using Definition~\ref{def:kernels for rotations} and the formula~\eqref{eq:convolution of line bundles}.
\end{proof}

\begin{definition}
  \label{def:beilinson resolution}
  We denote by $B_\Delta$ the Beilinson's resolution of the diagonal on $\P(V) \times \P(V)$:
  \[
    B_\Delta := [ \O_{\P(V)}(-n-1) \boxtimes \Omega_{\P(V)}^{n+1}(n+1) \to \ldots \to \O_{\P(V)}(-1) \boxtimes \Omega_{\P(V)}^1(1) \to \O_{\P(V)} \boxtimes \O_{\P(V)} ].
  \]
  For any $0 \leq i \leq n+1$ we denote by $s_{\geq -i}(B_\Delta)$ the stupid truncation of this resolution:
  \[
    s_{\geq -i}(B_\Delta) := [ \O_{\P(V)}(-i) \boxtimes \Omega_{\P(V)}^{i}(i) \to \ldots \to \O_{\P(V)} \boxtimes \O_{\P(V)} ].
  \]
\end{definition}

\begin{theorem}[{\cite{kuznetsov-v14}}]
  \label{thm:kernels for rotation powers}
  For $0 \leq i < d$ the morphism $\O_\Delta(i) \to Q_i$ from Lemma~\textup{\ref{lem:multiplication on the subcategory}} fits into an exact triangle
  \[
    s_{\geq -i}(B_\Delta)|_{X \times X} \otimes (\O_X(i) \boxtimes \O_X) \xrightarrow{\psi_i} \O_\Delta(i) \to Q_i,
  \]
  and the map~$\psi_i$ is a twist by~$\O_X(i) \boxtimes \O_X$ of the composition
  \[
    s_{\geq -i}(B_\Delta)|_{X \times X} \to B_\Delta|_{X \times X} \isoarrow (\O_{\Delta_{\P(V)}})|_{X \times X} \to \O_\Delta.
  \]
\end{theorem}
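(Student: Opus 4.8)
The plan is to argue by induction on $i$, using the recursive definition $Q_i = Q_{i-1}\circ\O_\Delta(1)\circ Q_0$. For the base case $i = 0$ the stupid truncation $s_{\geq 0}(B_\Delta)$ is the single term $\O_{\P}\boxtimes\O_{\P}$, so after restriction to $X\times X$ (the twist by $\O_X\boxtimes\O_X$ being trivial) the claimed triangle becomes $\O_X\boxtimes\O_X\to\O_\Delta\to Q_0$, which is precisely the defining triangle of $Q_0$ from Definition \ref{def:kernels for rotations}; and the assertion about $\psi_0$ reduces to the standard fact that the composite $\O_\P\boxtimes\O_\P = s_{\geq 0}(B_\Delta)\to B_\Delta\isoarrow\O_{\Delta_\P}$ is the augmentation, which after restriction to $X\times X$ and composition with the map $\O_{\Delta_\P}|_{X\times X}\to\O_\Delta$ of \eqref{eq:universal linkage} is the canonical map $\O_X\boxtimes\O_X\to\O_\Delta$.

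For the inductive step assume $0 < i < d$ and that the statement holds for $i-1$. Writing $Q_i = (Q_{i-1}\circ\O_\Delta(1))\circ Q_0$ and using the convolution identities of Section \ref{sec:orthogonal to structure sheaf} (in particular \eqref{eq:convolution of line bundles}), one has $Q_{i-1}\circ\O_\Delta(1)\cong Q_{i-1}\otimes\O_X(0,1)$ and, for any $K\in\Dbcoh(X\times X)$, $K\circ Q_0\cong\mathrm{cone}(\pi_{1*}(K)\boxtimes\O_X\to K)$, where $\pi_1$ is the first projection. Hence
\[
  Q_i\;\cong\;\mathrm{cone}\!\Big(\pi_{1*}\big(Q_{i-1}\otimes\O_X(0,1)\big)\boxtimes\O_X\;\longrightarrow\;Q_{i-1}\otimes\O_X(0,1)\Big).
\]
By the induction hypothesis $Q_{i-1}\otimes\O_X(0,1)\cong\mathrm{cone}\big(s_{\geq-(i-1)}(B_\Delta)|_{X\times X}\otimes(\O_X(i-1)\boxtimes\O_X(1))\to\O_\Delta(i)\big)$, so applying $\pi_{1*}$ produces a complex whose terms are $\O_X(i)$ together with $\O_X(i-1-j)\otimes H^\bullet(X,\Omega^j_\P(j+1)|_X)$ for $0\le j\le i-1$.

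Here the bound $i<d$ is essential: since $j+1\le i<d$ and $d<n+2$, the restriction sequence $0\to\Omega^j_\P(j+1-d)\to\Omega^j_\P(j+1)\to\Omega^j_\P(j+1)|_X\to 0$ together with Bott vanishing on $\P$ forces $H^\bullet(X,\Omega^j_\P(j+1)|_X)$ to be concentrated in degree $0$ and canonically equal to $\wedge^{j+1}V^\dual$. Consequently $\pi_{1*}(Q_{i-1}\otimes\O_X(0,1))$ may be computed term by term, and it is the restriction to $X$ of the tail $[\wedge^i V^\dual\otimes\O\to\ldots\to V^\dual\otimes\O(i-1)\to\O(i)]$ of the Koszul resolution attached to the Euler sequence. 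That tail is a locally split resolution of $\Omega^i_\P(i)$, so its restriction computes $\Omega^i_\P(i)|_X$ placed in cohomological degree $-i$; thus $\pi_{1*}(Q_{i-1}\otimes\O_X(0,1))\simeq\Omega^i_\P(i)|_X[i]$.

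Feeding this back into the cone description of $Q_i$ and comparing with the brutal-truncation triangle $s_{\geq-(i-1)}(B_\Delta)\monoarrow s_{\geq-i}(B_\Delta)\to\O_\P(-i)\boxtimes\Omega^i_\P(i)[i]$, twisted by $\O_X(i)\boxtimes\O_X$ and restricted to $X\times X$, exhibits $Q_i$ as $\mathrm{cone}\big(s_{\geq-i}(B_\Delta)|_{X\times X}\otimes(\O_X(i)\boxtimes\O_X)\to\O_\Delta(i)\big)$; the transposition of the two factors in the truncated Beilinson complex that the convolution convention introduces is absorbed by the Euler sequence. Since every morphism in the argument is assembled from the single map $\O_\Delta\to Q_0$ and the brutal-truncation maps of $B_\Delta$, a final compatibility check identifies the resulting map $\O_\Delta(i)\to Q_i$ with the one from Lemma \ref{lem:multiplication on the subcategory} and the other map with the composite described in the statement. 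I expect the genuinely delicate points to be this morphism bookkeeping and, more structurally, the verification that $i<d$ is exactly what is needed: it keeps the linkage defect $\O_\Delta(-d)[2]$ of \eqref{eq:universal linkage} and all higher cohomology of negative twists on $X$ out of the computation, so that $B_\Delta|_{X\times X}$ behaves like $\O_{\Delta_\P}|_{X\times X}$ throughout the range $0\le i<d$ — which is also why the borderline case $i=d$ has to be treated separately.
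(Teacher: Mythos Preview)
Your inductive argument is correct and follows essentially the same route as the paper's: both expand $Q_0$ as a cone, compute the convolution term by term, and identify the new contribution as a restricted $\Omega^i_\P(i)$ via a Koszul-type resolution, with the bound $i<d$ entering precisely to make the relevant cohomology on $X$ match that on $\P$. The only cosmetic difference is that the paper's bicomplex produces symmetric powers $S^j V^\dual \cong H^0(\O_X(j))$ and the Koszul resolution of $\Omega^{i+1}(i+1)$ sitting in the second factor, whereas your $\pi_{1*}$ computation yields exterior powers $\wedge^{j+1}V^\dual \cong H^0(\Omega^j_\P(j+1)|_X)$ and the dual Koszul tail; this is exactly the source of the transposition you flag at the end, and that remark is the one place your sketch remains a bit implicit.
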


\begin{proof}
  The statement is implicitly contained in the proof of \cite[Lem.~4.2]{kuznetsov-v14}, and can be proved by an inductive computation. For the sake of clarity, we sketch the argument. The base case $i = 0$ is true by definition of $Q_0$. Suppose the statement holds for $Q_i$ with $i < d-1$ and we want to prove it for the object
  \[
  Q_{i+1} \caniso Q_i \circ \O_\Delta(1) \circ [ \O_X(0 ,0) \to \O_{\Delta} ] \caniso \mathrm{Cone}(Q_i \circ \O_X(0, 1) \to Q_i \circ \O_\Delta(1)).
  \]
  This description as a cone, together with the formula for $Q_i$ that we know by induction, shows that $Q_{i+1}$ can be represented by the following complex:

  \begin{equation}\begin{tikzcd}
      \label{eq:inductive q}
	{\O_X \boxtimes (H^0(\O_X(1)) \otimes \Omega^i_{\P}(i)|_X)} &[-6mm] \ldots &[-5mm] {\O_X \boxtimes (H^0(\O_X(i+1)) \otimes \O_X)} &[-6mm] {\O_X \boxtimes \O_X(i+1)} \\
	{\O_X(1) \boxtimes \Omega^i_\P(i)|_X} & \ldots & {\O_X(i+1) \boxtimes \O_X} & {\O_\Delta(i+1)}
	\arrow[from=1-1, to=1-2]
	\arrow[from=2-1, to=2-2]
	\arrow[from=1-2, to=1-3]
	\arrow[from=2-2, to=2-3]
	\arrow[from=1-3, to=1-4]
	\arrow[from=2-3, to=2-4]
	\arrow[from=1-1, to=2-1]
	\arrow[from=1-2, to=2-2]
	\arrow[from=1-3, to=2-3]
	\arrow[from=1-4, to=2-4]
      \end{tikzcd}
    \end{equation}
  By induction hypothesis the differentials in the bottom row are given by the contraction with the restriction of the tautological section of $\O_{\P(V)}(-1) \boxtimes T_{\P(V)}(1)$, and the differentials in the top row are induced from that section.
  Note that $H^0(\O_X(j)) \caniso S^j V^\dual$ for any $j < d$.
  Recall that on the projective space we have a resolution for $\Omega^{i+1}_\P(i+1)$ given by a Koszul complex:
  \[
    0 \to \Omega_\P^{i+1}(i+1) \to V^\dual \otimes \Omega_P^i(i) \to \ldots \to \O \otimes S^{i+1} V^\dual \to \O_\P(i+1) \to 0.
  \]
  Thus, if $i+1 < d$,  we recognize the upper row in the diagram \eqref{eq:inductive q} to be a complex quasiisomorphic to a single vector bundle $\O_X \boxtimes \Omega^{i+1}_\P(i+1)$, put to the leftmost degree. This finishes the inductive argument.
\end{proof}

\begin{theorem}[{\cite{kuznetsov-v14}}]
  \label{thm:rotation periodicity}
  There exists a morphism $\varphi_{Q_d}\colon Q_d \to Q_0[2]$ such that the following diagram commutes:
  \begin{equation}
    \label{eq:rotation periodicity commutes}
    \begin{tikzcd}
      {S^d V^\dual} & {\Hom(Q_0, Q_d)} \\
      {\HH^2(X)} & {\Ext^2(Q_0, Q_0)}
      \arrow["{\varphi_{Q_d} \circ -}", from=1-2, to=2-2]
      \arrow["{m_{Q_d}}", from=1-1, to=1-2]
      \arrow[from=1-1, to=2-1]
      \arrow[from=2-1, to=2-2]
    \end{tikzcd}
  \end{equation}
  Here the left vertical map is from Proposition~\textup{\ref{prop:hh2 and jacobian ideal}}, the top horizontal map is from Lemma~\textup{\ref{lem:multiplication on the subcategory}}, and the bottom horizontal arrow comes from the identification of $\Ext^2(Q_0, Q_0)$ with $\HH^2(\O_X^\perp)$.
\end{theorem}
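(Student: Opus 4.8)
The plan is to produce $\varphi_{Q_d}$ directly from the universal linkage triangle~\eqref{eq:universal linkage}, exploiting that $Q_d$ is almost — but not quite — the object that the statement of Theorem~\ref{thm:kernels for rotation powers} would give at the excluded value $i = d$. Concretely, I would first run the inductive step of the proof of Theorem~\ref{thm:kernels for rotation powers} once more, passing from $Q_{d-1}$ to $Q_d$. The only place where that argument uses $i+1 < d$ is in identifying the upper row of the double complex~\eqref{eq:inductive q} with a shift of the bundle $\O_X \boxtimes \Omega^{d}_\P(d)|_X$: for $i+1 = d$ the coefficient of the $\O_X$-term of that row is $H^0(X, \O_X(d)) = S^d V^\dual / \langle f \rangle$ rather than $S^d V^\dual$, so the row is the quotient of the complex resolving $\Omega^d_\P(d)|_X$ by the one-dimensional subspace $\langle f \rangle$ in its degree $-1$ term. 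Since the evaluation $\langle f \rangle \otimes \O_X \to \O_X(d)$ vanishes on $X$, this subspace is a subcomplex, and an octahedron comparing the cone over the actual upper row with the cone over $\O_X \boxtimes \Omega^d_\P(d)|_X$ produces a distinguished triangle
\[
  (\O_X \boxtimes \O_X)[2] \longrightarrow N \longrightarrow Q_d \longrightarrow (\O_X \boxtimes \O_X)[3],
\]
where $N := \mathrm{Cone}\bigl(s_{\geq -d}(B_\Delta)|_{X \times X} \to \O_\Delta\bigr) \otimes (\O_X(d) \boxtimes \O_X)$ is precisely the cone of the morphism $\psi_d$ that the statement of Theorem~\ref{thm:kernels for rotation powers} produces at $i = d$.

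Next I would construct a morphism $p \colon N \to \O_\Delta[2]$. Restricting the stupid-truncation sequence $s_{\geq -d}(B_\Delta) \hookrightarrow B_\Delta \to s_{\leq -(d+1)}(B_\Delta)$ to $X \times X$, combining it with the quasi-isomorphism $B_\Delta|_{X \times X} \simeq \O_{\Delta_\P}|_{X \times X}$ and with~\eqref{eq:universal linkage}, and applying the octahedral axiom gives a distinguished triangle
\[
  s_{\leq -(d+1)}(B_\Delta)|_{X \times X} \longrightarrow \mathrm{Cone}\bigl(s_{\geq -d}(B_\Delta)|_{X \times X} \to \O_\Delta\bigr) \longrightarrow \O_\Delta(-d)[2],
\]
whose second arrow, twisted by $\O_X(d) \boxtimes \O_X$, is the desired $p$; by construction $p$ precomposed with the natural map $\O_\Delta(d) \to N$ is the $\O_X(d)$-twist of $\epsilon_X$. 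Composing $p$ with the canonical morphism $\O_\Delta \to Q_0$ gives $N \to Q_0[2]$. Now the composite $(\O_X \boxtimes \O_X)[2] \to N \xrightarrow{p} \O_\Delta[2]$ lies in $\Hom_{X \times X}(\O_X \boxtimes \O_X, \O_\Delta) = H^0(X, \O_X)$, which is one-dimensional and spanned by the multiplication map; that map is killed by $\O_\Delta \to Q_0$, being two consecutive morphisms of the defining triangle of $Q_0$. Hence $(\O_X \boxtimes \O_X)[2] \to N \to Q_0[2]$ vanishes, so $N \to Q_0[2]$ factors through $N \to Q_d$; convolving such a factorization with $Q_0$ on the right produces the required $\varphi_{Q_d} \colon Q_d \to Q_0[2]$.

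Finally I would verify the commutativity of~\eqref{eq:rotation periodicity commutes}. Fix $g \in S^d V^\dual$ and write $\Delta_*(g) \colon \O_\Delta \to \O_\Delta(d)$ for the diagonal pushforward of multiplication by $g$. Unwinding Lemma~\ref{lem:multiplication on the subcategory}, $m_{Q_d}(g)$ is the morphism $\O_\Delta(d) \to Q_d$ of that lemma precomposed with $\Delta_*(g)$ and convolved with $Q_0$; unwinding Proposition~\ref{prop:hh2 and jacobian ideal} and the definition of the restriction map, the other composite $S^d V^\dual \to \Ext^2(Q_0, Q_0)$ sends $g$ to $\epsilon_X \circ \Delta_*(g) \in \HH^2(X)$ convolved with $Q_0$. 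By the previous paragraph, $\varphi_{Q_d}$ composed with the morphism $\O_\Delta(d) \to Q_d$ of Lemma~\ref{lem:multiplication on the subcategory} equals, after convolution with $Q_0$, the composition $\O_\Delta(d) \xrightarrow{\epsilon_X} \O_\Delta[2] \to Q_0[2]$ convolved with $Q_0$; precomposing with $\Delta_*(g)$ and using that multiplication by $g$ is a natural transformation — exactly as at the end of the proof of Proposition~\ref{prop:hh2 and jacobian ideal} — gives the commutativity. I expect this last verification to be the main obstacle: it demands keeping careful track of the many $\O(a) \boxtimes \O(b)$-twists and of the identifications $Q_0 \circ \O_\Delta \simeq Q_0 \simeq Q_0 \circ Q_0$ and $Q_0 \circ Q_d \simeq Q_d$, as well as checking that the morphism $\O_\Delta(d) \to Q_d$ of Lemma~\ref{lem:multiplication on the subcategory} does factor through $\O_\Delta(d) \to N$, so that $\varphi_{Q_d}$ inherits the compatibility with $\epsilon_X$ recorded above.
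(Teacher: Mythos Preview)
Your argument is correct and essentially identical to the paper's: your object $N$ is the paper's $\widetilde{Q_d}$, the triangle $(\O_X \boxtimes \O_X)[2] \to N \to Q_d$ and the map $p\colon N \to \O_\Delta[2]$ are built in the same way from the stupid truncation and the linkage class, and the commutativity check traces through the same factorization $\O_\Delta(d) \to N \to \O_\Delta[2]$ recovering $\epsilon_X$. The one cosmetic difference is that the paper convolves $N \to \O_\Delta[2]$ with $Q_0$ on the right directly --- using that $(\O_X \boxtimes \O_X) \circ Q_0 = 0$ makes $N \circ Q_0 \to Q_d \circ Q_0 = Q_d$ an isomorphism --- rather than first factoring through $Q_d$, which sidesteps the vanishing argument you give and renders your final convolution step redundant.
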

\begin{proof}
  This is, again, implicitly contained in the proof of \cite[Lem.~4.2]{kuznetsov-v14}. To explain the commutativity of the diagram, we repeat the argument.

  Denote by $\widetilde{Q_d}$ the cone
  \begin{equation}
    \label{eq:widetilde qd}
    \widetilde{Q_d} := \mathrm{Cone}(s_{\geq -d}(B_\Delta|_{X \times X}) \otimes \O(d, 0) \to \O_\Delta(d))
  \end{equation}
  as in Theorem~\ref{thm:kernels for rotation powers}. The diagram~\eqref{eq:inductive q} for~$i = d-1$ shows that the difference between~$\widetilde{Q_d}$ and~$Q_d$ arises from the fact that $H^0(X, \O_X(d))$ is isomorphic not to~$S^d V^\dual$, but to the quotient~$S^d V^\dual / \langle f \rangle$, where~$f$ is the equation of the hypersurface~$X \subset \P(V)$. More precisely, there is a distinguished triangle
  \[
    \langle f \rangle \cdot \O_X \boxtimes \O_X[2] \to \widetilde{Q_d} \to Q_d.
  \]
  Note that the convolution $(\O_X \boxtimes \O_X) \circ Q_0$ is a zero object since the Fourier--Mukai transform along $\O_X \boxtimes \O_X$ vanishes on $\O_X^\perp$, and $Q_0$ is exactly the projector to $\O_X^\perp$. Hence the convolution on the right with $Q_0$ produces an isomorphism
  \[
    \widetilde{Q_d} \circ Q_0 \to Q_d \circ Q_0 = Q_d.
  \]

  Consider now the commutative diagram of distinguished triangles arising from the stupid truncation of $B_\Delta|_{X \times X} \iso \O_{\Delta_\P}|_{X \times X}$:

  \begin{equation}
    \label{eq:rotation periodicity comparison}
    \begin{tikzcd}
	{s_{\geq -d}(B_\Delta|_{X \times X}) \otimes \O(d, 0)} & {\O_\Delta(d)} & {\widetilde{Q_d}} \\
	{B_\Delta|_{X \times X} \otimes \O(d, 0)} & {\O_\Delta(d)} & {\O_\Delta[2]}
	\arrow[from=1-1, to=1-2]
	\arrow[from=1-2, to=1-3]
	\arrow[from=1-3, to=2-3]
	\arrow["", from=2-1, to=2-2]
	\arrow["{\epsilon_X}", from=2-2, to=2-3]
	\arrow["{=}", from=1-2, to=2-2]
	\arrow[from=1-1, to=2-1]
      \end{tikzcd}
    \end{equation}
  Since $B_\Delta$ is a resolution of the structure sheaf of the diagonal $\O_{\Delta_{\P}} \in \Dbcoh(\P \times \P)$, the bottom horizontal triangle is the universal linkage class as defined in \eqref{eq:universal linkage}.

  Using the rightmost vertical map, we define the morphism $Q_d \to Q_0[2]$ as the composition:
  \[
    Q_d \iso \widetilde{Q_d} \circ Q_0 \to \O_\Delta[2] \circ Q_0 \caniso Q_0[2].
  \]
  It only remains to show the commutativity of the diagram~\eqref{eq:rotation periodicity commutes}.
  Recall the natural morphism~$\O_\Delta \to Q_d$ defined in Lemma~\ref{lem:multiplication on the subcategory}. It is easy to see from the definition~\eqref{eq:widetilde qd} that this morphism lifts to the map $\O_\Delta(d) \to \widetilde{Q_d}$, and thus the map $m_{Q_d}$ factors through the map
  \[
    S^d V^\dual \to \Hom(\O_\Delta, \O_\Delta(d)) \to \Hom(\O_\Delta, \widetilde{Q_d}).
  \]
  Since the rightmost square in the diagram~\eqref{eq:rotation periodicity comparison} commutes, we additionally see that the map $m_{Q_d}$ factors through the composition with the universal linkage class:
  \[
    S^d V^\dual \to \Hom(\O_\Delta, \O_\Delta(d)) \xrightarrow{\epsilon_X \circ -} \Hom(\O_\Delta, \O_\Delta[2]),
  \]
  and the commutativity of the diagram \eqref{eq:rotation periodicity commutes} follows from Proposition~\ref{prop:hh2 and jacobian ideal}.
\end{proof}

\section{Kuznetsov components}
\label{sec:kuznetsov component}

We begin by discussing the basic properties of Kuznetsov components and their rotation functors.

\begin{definition}
  \label{def:kuznetsov component}
  The \emph{Kuznetsov component} of the hypersurface $X \subset \P(V)$ of degree $d < n+2$ is the category $\mA_X$ defined as the left orthogonal to the exceptional sequence
  \[
    \langle \O_X, \O_X(1), \ldots, \O_X(n-d+1) \rangle
  \]
  in the category $\Dbcoh(X)$.
\end{definition}

\begin{definition}
  \label{def:functors p}
  We define the object $P_0 \in \Dbcoh(X \times X)$ to be the Fourier--Mukai kernel of the left projector to the subcategory $\mA_X \subset \Dbcoh(X)$. Recall the fundamental triangle of projector objects:
  \begin{equation}
    \label{eq:kuznetsov projectors triangle}
    P_0^\prime \to \O_\Delta \to P_0.
  \end{equation}
  where $P_0^\prime$ is the right projector to the subcategory $\langle \O_X, \ldots, \O_X(n-d+1)\rangle$.
  
  We define the objects $P_i$ for $i > 0$ recursively:
  \[
    P_i := P_{i-1} \circ \O_\Delta(1) \circ P_0,
  \]
  where the symbol $\circ$ denotes the convolution of Fourier--Mukai kernels in $\Dbcoh(X \times X)$.
\end{definition}

\begin{definition}
  \label{def:rotation functor}
  The \emph{rotation functor} $\Phi_{\mA_X}\colon \mA_X \to \mA_X$ of $\mA_X$ is defined as the composition
  \[
    \mA_X \monoarrow \Dbcoh(X) \xrightarrow{- \otimes \O_X(1)} \Dbcoh(X) \epiarrow \mA_X.
  \]
  It can alternatively be described as the Fourier--Mukai transform along the kernel $P_1$.
\end{definition}

The following result by Huybrechts and Rennemo computes the space of natural transformations from the identity functor on $\mA_X$ to the rotation functor.

\begin{lemma}[{\cite[Lem.~3.1]{huybrechts-rennemo}}]
  \label{lem:multiplication on kuznetsov component}
  For any $i \geq 0$ there is a natural morphism $\O_\Delta(i) \to P_i$, which induces a map
  \[
    m_{P_i}\colon S^i V^\dual \to \Hom(\O_\Delta, \O_\Delta(i)) \to \Hom(\O_\Delta, P_i) \to \Hom(P_0, P_i).
  \]
  Furthermore, if $d > 3$ or if $d = 3$ and $n > 3$, the map $m_{P_1}\colon V^\dual \to \Hom(P_0, P_1)$  is an isomorphism of vector spaces.
\end{lemma}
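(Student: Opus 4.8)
The natural morphism $\O_\Delta(i)\to P_i$ is built exactly as in the proof of Lemma~\ref{lem:multiplication on the subcategory}: the fundamental triangle~\eqref{eq:kuznetsov projectors triangle} gives a morphism $\O_\Delta\to P_0$, and composing $i$ copies of it along the recursion $P_i = P_{i-1}\circ\O_\Delta(1)\circ P_0$ yields $\O_\Delta(i)\to P_i$; applying $\Hom(\O_\Delta,-)$ and then $\Hom(P_0,-)$, and precomposing with the natural map $S^iV^\dual\to H^0(X,\O_X(i))\isoarrow\Hom(\O_\Delta,\O_\Delta(i))$, defines $m_{P_i}$. This part is purely formal, so the content is that $m_{P_1}$ is an isomorphism when $d>3$, or $d=3$ and $n>3$.

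To prove this, the plan is to reduce $\Hom_{\Dbcoh(X\times X)}(P_0,P_1)$, step by step, to a single cohomological computation on $X$. Write $\mB = \langle\O_X,\dots,\O_X(n-d+1)\rangle$, so that $\Ext^\bullet_X(A,\O_X(j)) = 0$ for every $A\in\mA_X$ and $0\le j\le n-d+1$, and let $P_0'$ be the projector onto $\mB$ from~\eqref{eq:kuznetsov projectors triangle}. Using $P_1 = P_0\circ\O_\Delta(1)\circ P_0$, the fundamental triangle, and the semiorthogonality above (which makes $P_0$ orthogonal in $\Dbcoh(X\times X)$ to the relevant kernels built from $P_0'$, since $\Phi_{P_0}$ is the projection onto $\mA_X$ and kills $\mB$), one reduces $\Hom(P_0,P_1)$ first to $\Hom(P_0,\O_\Delta(1))$ and then, via $\Hom(-,\O_\Delta(1))$ applied to~\eqref{eq:kuznetsov projectors triangle}, to $\Hom(\O_\Delta,\O_\Delta(1)) = H^0(X,\O_X(1))$ modulo correction $\Ext$-groups involving $P_0'$. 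Tracing the morphism $\O_\Delta(1)\to P_1$ through these identifications shows the induced map $V^\dual\to H^0(X,\O_X(1))\caniso\Hom(P_0,P_1)$ is $m_{P_1}$, the first arrow being an isomorphism since $d\ge 2$ (restrict the conormal sequence $0\to\O_\P(1-d)\to\O_\P(1)\to\O_X(1)\to 0$ and use $H^{\le 1}(\P,\O_\P(1-d))=0$). So everything comes down to the vanishing of the correction terms, the essential one being $\Ext^{\le 0}_{X\times X}(P_0',\O_\Delta(1))=0$.

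That vanishing is where I expect the real work, and the numerical hypothesis, to lie. The kernel $P_0'$ is resolved by a complex whose terms are external products built from the objects $\O_X(j)$ and their mutated duals, and since $\RHom_{X\times X}(F_1\boxtimes F_2,\O_\Delta(1)) = \RHom_X(F_1\otimes F_2,\O_X(1))$, the complex $\RHom_{X\times X}(P_0',\O_\Delta(1))$ unwinds into a complex of cohomology groups $H^\bullet(X,\O_X(a))$ for $a$ in a bounded range, glued by the differentials of the relevant Beilinson-type resolution — exactly the sort of complex analyzed in the proofs of Theorems~\ref{thm:kernels for rotation powers} and~\ref{thm:rotation periodicity}. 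One must show it is concentrated in strictly positive degrees; I expect this to reduce to vanishing of $H^\bullet(X,\O_X(a))$ for the small $a$ that occur together with acyclicity of a Koszul-type complex, and that $d>3$ (or $d=3,\,n>3$) is exactly what controls the length $n-d+1$ of the exceptional collection relative to $d$, so that the only twist reaching past the collection, $\O_X(n-d+2)$, contributes nothing in the relevant degrees. In the excluded small cases — cubic threefolds and below — the collection is comparatively long, this correction term survives, and $m_{P_1}$ acquires a kernel or cokernel, consistent with the hypotheses being sharp. A possibly shorter route would be to compare directly with the rotation kernel $Q_1$ of $\O_X^\perp$ from Section~\ref{sec:orthogonal to structure sheaf}: one would factor $m_{P_1}$ through the isomorphism $m_{Q_1}\colon V^\dual\isoarrow\Hom(Q_0,Q_1)$ of Lemma~\ref{lem:multiplication on the subcategory} and a comparison map $\Hom(Q_0,Q_1)\to\Hom(P_0,P_1)$, reducing the problem to showing that comparison is an isomorphism, which again isolates the same vanishing coming from the extra exceptional objects $\O_X(1),\dots,\O_X(n-d+1)$.
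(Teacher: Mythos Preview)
The paper does not prove this lemma: it is stated with a citation to \cite[Lem.~3.1]{huybrechts-rennemo} and no proof environment follows, so there is no ``paper's own proof'' to compare against. (The acknowledgements even thank Huybrechts and Rennemo for explanations concerning this lemma.) Your construction of the morphism $\O_\Delta(i)\to P_i$ is correct and exactly parallels Lemma~\ref{lem:multiplication on the subcategory}.

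On the substance of your sketch for the isomorphism claim: the reduction strategy via the fundamental triangle~\eqref{eq:kuznetsov projectors triangle} is reasonable, and isolating the vanishing of $\Ext^{\le 0}_{X\times X}(P_0',\O_\Delta(1))$ as the crux is the right instinct. However, what you have written is an outline, not a proof --- the phrases ``I expect'', ``one must show'', and ``would be'' mark precisely the places where the actual work is missing, and you have not identified which specific cohomology groups govern the vanishing or why the numerical hypothesis enters. Your diagnostic of the excluded cases is also backwards: for a cubic threefold ($d=3$, $n=3$) the exceptional collection $\langle \O_X,\O_X(1)\rangle$ has length two, which is \emph{shorter}, not longer, than for a cubic fourfold or quintic; so ``the collection is comparatively long'' cannot be the mechanism that breaks the argument. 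The failure for cubic threefolds is more subtle and tied to the special structure of $\mA_X$ in that case (cf.\ the remark after Theorem~\ref{thm:main theorem intro}). If you want to complete the argument, you should consult the original proof in \cite{huybrechts-rennemo} to see exactly how the hypothesis $d>3$ or $(d,n)=(3,>3)$ is used.
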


For the proof of the main Theorem~\ref{thm:main theorem intro} we need some information about natural transformations from the identity functor of $\mA_X$ to the $d$'th power of the rotation functor. The following two results are sufficient for our purposes.

\begin{lemma}
  \label{lem:compatibility for rotations}
  There exists a natural morphism $P_0 \to Q_0$, which induces a map $P_i \to Q_i$ for any $i \geq 0$. The precomposition with $P_0$ transforms this map into an isomorphism:
  \[
    P_i \caniso P_i \circ P_0 \isoarrow Q_i \circ P_0.
  \]
\end{lemma}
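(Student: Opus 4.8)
The strategy is to construct the comparison morphism $P_0 \to Q_0$ directly from the two defining triangles, propagate it along the recursion defining the $P_i$, and then deduce the isomorphism statement by an induction that ultimately rests on one small cohomological vanishing on $X$. For the construction, put next to the triangle $P_0' \to \O_\Delta \to P_0$ of Definition~\ref{def:functors p} the triangle $\O_X \boxtimes \O_X \to \O_\Delta \to Q_0$ defining $Q_0$; here $\O_X \boxtimes \O_X$ and $P_0'$ are the Fourier--Mukai kernels of the right projectors onto $\langle\O_X\rangle$ and onto $\langle\O_X,\ldots,\O_X(n-d+1)\rangle$. The inclusion $\langle\O_X\rangle \subseteq \langle\O_X,\ldots,\O_X(n-d+1)\rangle$ of exceptional collections induces, through composition of the respective adjunctions, a canonical morphism between these two projector kernels compatible with their counits to $\O_\Delta$; completing this square to a morphism of triangles yields the natural morphism between $P_0$ and $Q_0$. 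Convolving it into the recursion $P_i = P_{i-1} \circ \O_\Delta(1) \circ P_0$ and using $P_{i-1} \to Q_{i-1}$ inductively then produces $P_i \to Q_i$ for all $i \ge 0$, by functoriality of the convolution.

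Next I would set up the isomorphism. Because $P_0$ is idempotent and each $P_i$ is a convolution ending in $P_0$, we have $P_i \circ P_0 \caniso P_i$ canonically, so it remains to prove that the morphism $P_i \caniso P_i \circ P_0 \to Q_i \circ P_0$ is invertible. Passing to Fourier--Mukai functors and abbreviating $S := (- \otimes \O_X(1))$, the recursion gives $\Phi_{P_i} \circ \Phi_{P_0} = (\Phi_{P_0}S)^i\,\Phi_{P_0}$ and $\Phi_{Q_i} \circ \Phi_{P_0} = (\Phi_{Q_0}S)^i\,\Phi_{P_0}$; the second identity uses $\Phi_{Q_0} \circ \Phi_{P_0} \caniso \Phi_{P_0}$, valid since the image of $\Phi_{P_0}$ is $\mA_X \subseteq \O_X^\perp$ and $\Phi_{Q_0}$ is the identity on $\O_X^\perp$. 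Now $\Phi_{P_{i-1}}$ also takes values in $\mA_X$, so by induction on $i$ — comparing $\Phi_{Q_0} \circ S \circ \Phi_{P_{i-1}}$ with $\Phi_{P_0} \circ S \circ \Phi_{P_{i-1}} = \Phi_{P_i}$ — everything reduces to the following claim: \emph{for every $G \in \mA_X$ the natural morphism $\Phi_{P_0}(G \otimes \O_X(1)) \to \Phi_{Q_0}(G \otimes \O_X(1))$ is an isomorphism.}

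The heart of the proof is this claim. Since $\Phi_{Q_0}$ is the projector onto $\O_X^\perp$, while $\Phi_{P_0}$ is the identity on $\mA_X$ and $\Phi_{P_0} \caniso \Phi_{P_0} \circ \Phi_{Q_0}$ (projection onto a subcategory factors through projection onto an intermediate one), it suffices to show $\Phi_{Q_0}(G(1)) \in \mA_X$, where I abbreviate $G(1) := G \otimes \O_X(1)$. From the description $\Phi_{Q_0}(G(1)) = \mathrm{Cone}\bigl(\RGamma(G(1)) \otimes \O_X \to G(1)\bigr)$ one gets $\Ext^\bullet(\O_X, \Phi_{Q_0}(G(1))) = 0$ automatically, and for $1 \le j \le n-d+1$ the group $\Ext^\bullet(\O_X(j), \O_X) \caniso H^\bullet(X, \O_X(-j))$ vanishes: this is immediate from $\omega_X \caniso \O_X(d-n-2)$ together with Serre duality, which gives $H^n(X, \O_X(-j)) \caniso H^0(X, \O_X(d-n-2+j))^\dual = 0$ since $d - n - 2 + j < 0$, and with the vanishing of intermediate cohomology of line bundles on a hypersurface of dimension $\ge 2$. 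Therefore $\Ext^\bullet(\O_X(j), \Phi_{Q_0}(G(1))) \caniso \Ext^\bullet(\O_X(j), G(1)) \caniso \Ext^\bullet(\O_X(j-1), G) = 0$, the last equality because $G \in \mA_X$ and $0 \le j - 1 \le n - d$. Hence $\Phi_{Q_0}(G(1)) \in \mA_X$, which closes the induction.

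I expect the main obstacle to be organizational rather than conceptual: one must keep the chosen natural transformation in step with the morphism whose invertibility is being asserted, and establish the identifications $\Phi_{Q_0} \circ \Phi_{P_0} \caniso \Phi_{P_0}$ and $\Phi_{P_0} \caniso \Phi_{P_0} \circ \Phi_{Q_0}$ at the level of actual morphisms, not merely up to abstract isomorphism. The only genuine geometric input is the vanishing $H^\bullet(X, \O_X(-j)) = 0$ for $1 \le j \le n - d + 1$ — equivalently, that $\langle\O_X, \ldots, \O_X(n-d+1)\rangle$ is an exceptional collection.
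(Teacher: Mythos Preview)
Your proof is correct and follows essentially the same route as the paper's: both reduce the inductive statement to the base case that $P_0$ and $Q_0$ agree on the image of $\O_\Delta(1)\circ P_0$, i.e., on objects $G(1)$ with $G\in\mA_X$, and both verify this by observing that such $G(1)$ already lies in $\langle \O_X(1),\ldots,\O_X(n-d+1)\rangle^\perp$. The only difference is cosmetic --- you unpack the cone description of $\Phi_{Q_0}(G(1))$ and invoke the vanishing $H^\bullet(X,\O_X(-j))=0$ explicitly, whereas the paper absorbs this into the standing fact that $\langle \mA_X,\O_X,\ldots,\O_X(n-d+1)\rangle$ is a semiorthogonal decomposition.
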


\begin{remark}
  Since $P_0$ is the projector to the subcategory $\mA_X$, the last claim of Lemma~\ref{lem:compatibility for rotations} essentially means that the Fourier--Mukai transforms $\Dbcoh(X) \to \Dbcoh(X)$ along the two kernels~$P_i$ and~$Q_i$ agree on the subcategory $\mA_X \subset \Dbcoh(X)$.
\end{remark}

\begin{proof}
  Note that $P_0$ and $Q_0$ are projector functors to subcategories $\mA_X$ and $\O_X^\perp$, respectively. Since $\mA_X \subset \O_X^\perp$, the statement is true for $i = 0$.
  Since $P_i$ and $Q_i$ are defined inductively in terms of $P_0$ and $Q_0$, it is enough to show that the convolution with the map $P_0 \to Q_0$ induces an isomorphism
  \[
    P_0 \circ \O_\Delta(1) \circ P_0 \isoarrow Q_0 \circ \O_\Delta(1) \circ P_0.
  \]
  Since $Q_0$ and $P_0$ are projectors to the subcategories $\O_X^\perp$ and $\mA_X = \langle \O_X, \ldots, \O_X(n-d+1)\rangle^\perp$, respectively, it is enough to show that the image of the Fourier--Mukai transform along the object $\O_\Delta(1) \circ P_0$ lies in the orthogonal to the exceptional sequence $\langle \O_X(1), \ldots, \O_X(n-d+1)\rangle$. This is clear since $P_0$ is the projector to $\mA_X$.
\end{proof}

\begin{theorem}[{\cite{kuznetsov-v14}}]
  \label{thm:kuznetsov periodicity}
  There is an isomorphism $\varphi_{P_d}\colon P_d \iso P_0[2]$ such that the following diagram commutes:
  \begin{equation}
    \label{eq:kuznetsov periodicity}
    \begin{tikzcd}
      {S^d V^\dual} & {\Hom(P_0, P_d)} \\
      {\HH^2(X)} & {\Ext^2(P_0, P_0)}
      \arrow["{\varphi_{P_d} \circ -}", from=1-2, to=2-2]
      \arrow["{m_{P_d}}", from=1-1, to=1-2]
      \arrow[from=1-1, to=2-1]
      \arrow[from=2-1, to=2-2]
    \end{tikzcd}
  \end{equation}
  Here the left vertical map is from Proposition~\textup{\ref{prop:hh2 and jacobian ideal}}, and the bottom horizontal arrow comes from the identification of $\Ext^2(P_0, P_0)$ with $\HH^2(\mA_X)$.
\end{theorem}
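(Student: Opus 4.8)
The plan is to deduce this from the already-established periodicity result for the orthogonal to the structure sheaf, Theorem~\ref{thm:rotation periodicity}, by transporting everything along the comparison map $P_i \to Q_i$ of Lemma~\ref{lem:compatibility for rotations}. First I would observe that convolving on the right with $P_0$ is the key operation: since $P_0$ is the projector onto $\mA_X$ and $\mA_X \subset \O_X^\perp$, Lemma~\ref{lem:compatibility for rotations} gives canonical isomorphisms $P_i \caniso P_i \circ P_0 \isoarrow Q_i \circ P_0$ for all $i \geq 0$. In particular $P_d \caniso Q_d \circ P_0$ and $P_0 \caniso Q_0 \circ P_0$. Now apply the functor $(-) \circ P_0$ to the morphism $\varphi_{Q_d}\colon Q_d \to Q_0[2]$ from Theorem~\ref{thm:rotation periodicity} and define
\[
  \varphi_{P_d}\colon P_d \caniso Q_d \circ P_0 \xrightarrow{\varphi_{Q_d} \circ \mathrm{id}_{P_0}} Q_0[2] \circ P_0 \caniso P_0[2].
\]

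Next I would argue that $\varphi_{P_d}$ is an isomorphism, which is the one genuinely new assertion compared to Theorem~\ref{thm:rotation periodicity} (there $\varphi_{Q_d}$ is only a morphism). The cleanest route is to recall why $\varphi_{Q_d}$ need not be invertible: in the construction in the proof of Theorem~\ref{thm:rotation periodicity}, $\widetilde{Q_d} \to \O_\Delta[2]$ arises from the universal linkage triangle, and the failure of $Q_d \to Q_0[2]$ to be an isomorphism is measured by the class of $f \in S^d V^\dual$, i.e.\ by the cone $\langle f \rangle \cdot \O_X \boxtimes \O_X [2] \to \widetilde{Q_d} \to Q_d$ together with the analogous discrepancy on the target. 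Concretely, $\varphi_{Q_d}$ sits in a commutative diagram whose failure-to-be-iso term is a convolution involving $\O_X \boxtimes \O_X$; but the Fourier--Mukai transform along $\O_X \boxtimes \O_X$ annihilates $\O_X^\perp$, and $P_0$ is the projector to $\mA_X \subset \O_X^\perp$, so $(\O_X \boxtimes \O_X) \circ P_0 = 0$. Hence convolving $\varphi_{Q_d}$ on the right with $P_0$ kills the obstruction and yields an isomorphism. (Alternatively one can cite the fact that $\mA_X$ is smooth and proper with Serre functor $[n]$ composed with a $(n+2-d)/?$-twist, but the convolution argument is more direct and is already the mechanism used in the proof of Theorem~\ref{thm:rotation periodicity}.)

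Finally, for the commutativity of the square~\eqref{eq:kuznetsov periodicity}, I would chase the diagram through the comparison maps. The natural morphism $\O_\Delta(i) \to P_i$ of Lemma~\ref{lem:multiplication on kuznetsov component} is compatible, by construction, with the natural morphism $\O_\Delta(i) \to Q_i$ of Lemma~\ref{lem:multiplication on the subcategory} via $P_i \to Q_i$; precomposing with $P_0$ and using $P_i \isoarrow Q_i \circ P_0$ shows that $m_{P_d}$ is obtained from $m_{Q_d}$ by applying $(-)\circ P_0$ and identifying along Lemma~\ref{lem:compatibility for rotations}. Likewise the left vertical map $S^d V^\dual \to \HH^2(X)$ is the same map in both theorems (it comes from Proposition~\ref{prop:hh2 and jacobian ideal} and does not depend on the subcategory), and the right vertical map $\HH^2(X) = \Ext^2(\O_\Delta,\O_\Delta) \to \Ext^2(P_0,P_0)$ is restriction of classes along $P_0$, which is exactly the identification of $\Ext^2(P_0,P_0)$ with $\HH^2(\mA_X)$. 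Since $\varphi_{P_d} = \varphi_{Q_d} \circ \mathrm{id}_{P_0}$, applying $(-)\circ P_0$ to the commutative square~\eqref{eq:rotation periodicity commutes} of Theorem~\ref{thm:rotation periodicity} and composing with the canonical isomorphisms of Lemma~\ref{lem:compatibility for rotations} produces precisely the square~\eqref{eq:kuznetsov periodicity}. The main obstacle I anticipate is bookkeeping: verifying that all the identifications $P_i \caniso Q_i \circ P_0$, the two ``multiplication'' maps $m_{P_i}$ and $m_{Q_i}$, and the two Hochschild identifications are mutually compatible under $(-)\circ P_0$ in a way that makes the functor $(-)\circ P_0$ send~\eqref{eq:rotation periodicity commutes} to~\eqref{eq:kuznetsov periodicity} on the nose, rather than up to some sign or uncontrolled automorphism.
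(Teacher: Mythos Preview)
Your overall strategy---define $\varphi_{P_d}$ as $\varphi_{Q_d}$ convolved on the right with $P_0$, deduce the commutativity of~\eqref{eq:kuznetsov periodicity} from that of~\eqref{eq:rotation periodicity commutes}---is exactly what the paper does. The commutativity part is fine.

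The gap is in your argument that $\varphi_{P_d}$ is an isomorphism. You misidentify the cone of $\varphi_{Q_d}$. The triangle $\langle f \rangle \cdot \O_X \boxtimes \O_X[2] \to \widetilde{Q_d} \to Q_d$ measures the difference between $\widetilde{Q_d}$ and $Q_d$, not the failure of $\varphi_{Q_d}$ to be invertible. In the construction of Theorem~\ref{thm:rotation periodicity}, $\varphi_{Q_d}$ is obtained by convolving the rightmost vertical arrow of diagram~\eqref{eq:rotation periodicity comparison} with $Q_0$; its cone is therefore
\[
  \bigl(s_{\leq -d-1}(B_\Delta|_{X \times X}) \otimes \O(d,0)\bigr) \circ Q_0,
\]
which has terms of the form $\O_X(d-k) \boxtimes \Omega^k_\P(k)|_X$ for $d+1 \leq k \leq n+1$, i.e.\ with first factor $\O_X(-1), \ldots, \O_X(d-n-1)$, not $\O_X$. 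Your argument ``$(\O_X \boxtimes \O_X) \circ P_0 = 0$'' does not touch these terms. Worse, if your identification of the cone were correct, the same reasoning would show that $\varphi_{Q_d}$ itself is an isomorphism (since $(\O_X \boxtimes \O_X) \circ Q_0 = 0$ as well), which is false whenever $d < n+1$.

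The paper's fix is to convolve diagram~\eqref{eq:rotation periodicity comparison} directly with $P_0$, so that the cone of $\varphi_{P_d}$ is $\bigl(s_{\leq -d-1}(B_\Delta|_{X \times X}) \otimes \O(d,0)\bigr) \circ P_0$, and then observe that for each $k$ in the range $d+1 \leq k \leq n+1$ the line bundle $\O_X(d-k)$ lies in $\langle \O_X, \O_X(1), \ldots, \O_X(n-d+1)\rangle = {}^\perp \mA_X$, so each term $(\O_X(d-k) \boxtimes \Omega^k_\P(k)|_X) \circ P_0$ vanishes. This is where the full exceptional collection defining $\mA_X$, not just $\O_X$, is used.
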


\begin{proof}
  This is proved in \cite[Lem.~4.2]{kuznetsov-v14}. We repeat the argument for the convenience of the reader.
  By Lemma~\ref{lem:compatibility for rotations} the convolution $Q_d \circ P_0$ is naturally isomorphic to $P_d$.
  Recall the morphism $\varphi_{Q_d}\colon Q_d \to Q_0[2]$ from Theorem~\ref{thm:rotation periodicity}. Let $\varphi_{P_d}\colon P_d \to P_0[2]$ be the convolution~$\varphi_{Q_d} \circ P_0$. Then by Theorem~\ref{thm:rotation periodicity} the following diagram commutes

  \[\begin{tikzcd}
	{S^d V^\dual} & {\Hom(Q_0, Q_d)} & {\Hom(P_0, P_d)} \\
	{\HH^2(X)} & {\Ext^2(Q_0, Q_0)} & {\Ext^2(P_0, P_0)}
	\arrow["{\varphi_{Q_d} \circ -}", from=1-2, to=2-2]
	\arrow["{- \circ P_0}", from=1-2, to=1-3]
	\arrow["{- \circ P_0}", from=2-2, to=2-3]
	\arrow["{\varphi_{P_d} \circ -}", from=1-3, to=2-3]
	\arrow[from=1-1, to=2-1]
	\arrow[from=1-1, to=1-2]
	\arrow[from=2-1, to=2-2]
      \end{tikzcd}\]

    The composition of the maps in the upper row is equal to $m_{P_d}$ by definition. Thus the commutativity of the diagram~\eqref{eq:kuznetsov periodicity} is proved. It remains only to show that the map $\varphi_{P_d}$ is an isomorphism. To do this, consider the convolution of the diagram~\eqref{eq:rotation periodicity comparison} used in the proof of Theorem~\ref{thm:rotation periodicity} with the object~$P_0$. Note that $\widetilde{Q_d} \circ P_0 \caniso Q_d \circ P_0 \caniso P_d$. Thus the rightmost vertical map is exactly the morphism $\varphi_{P_d}\colon P_d \to P_0[2]$, and its cone is isomorphic to the object
    \begin{equation}
      \label{eq:cone of the isomorphism}
      (s_{\leq -d-1}(B_\Delta|_{X \times X}) \otimes \O(d, 0)) \circ P_0[1].
    \end{equation}
    To show that this cone is zero, by Definition~\ref{def:beilinson resolution} it is enough to check that the convolution
    \[
      (\O_X(-k + d) \boxtimes \Omega_\P^k(k)|_X) \circ P_0
    \]
    vanishes for any $k$ satisfying $n+1 \geq k \geq d+1$. Since $P_0$ is the projector to the Kuznetsov component $\mA_X$, any object in the image of $P_0$ is right-orthogonal to $\O_X(-k+d)$ for $k \in [d+1; n+1]$, and hence the object~\eqref{eq:cone of the isomorphism} is zero.
\end{proof}

\section{Rotation functors and Hochschild cohomology}
\label{sec:injectivity on hochschild}

Recall that we work with a Fano hypersurface $X \subset \P(V)$ of dimension $n$ and degree $d$. In particular, $d < n+2$.

\begin{lemma}
  \label{lem:nondiagonal diamonds for hypersurfaces}
  If $d > 3$ or if $d = 3$ and $n \geq 3$, then the Hodge diamond of $X$ is not diagonal.
\end{lemma}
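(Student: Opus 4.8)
The plan is to exhibit a single nonzero Hodge number $h^{p,q}(X)$ with $p \neq q$. The most natural candidate is $h^{1, n-1}(X) = \dim H^{n-1}(X, \Omega^1_X)$, or equivalently, by Serre duality and Hodge symmetry, a group in the primitive middle cohomology of $X$ that does not sit on the diagonal. First I would recall that for a smooth hypersurface $X \subset \P(V)$ the cohomology away from the middle degree is inherited from $\P^{n+1}$ by the Lefschetz hyperplane theorem, so it is purely of type $(p,p)$; the only possible off-diagonal contributions live in $H^n(X)$, whose primitive part $H^n_{\mathrm{prim}}(X)$ carries a Hodge structure with Hodge numbers governed by the Jacobian ring.

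The key input is Griffiths' description of the primitive cohomology: there is an isomorphism
\[
  H^{n-p, p}_{\mathrm{prim}}(X) \caniso J^{(p+1)d - n - 2}(f),
\]
where $J^\bullet(f)$ is the Jacobian ring from Definition~\ref{def:jacobian ring} (this is the standard Griffiths residue calculation; it is the same tool Donagi used in \cite{donagi}, already mentioned in the paper). Thus to show the Hodge diamond is non-diagonal it suffices to find an integer $p$ with $0 \le p \le n$, $p \neq n - p$ (i.e.\ $n \neq 2p$), such that the graded piece $J^{(p+1)d-n-2}(f)$ is nonzero. Since $X$ is a smooth hypersurface, $f$ has an isolated singularity at the origin of the affine cone, so the Jacobian ring $J^\bullet(f)$ is a finite-dimensional Gorenstein Artinian ring with socle in degree $\sigma := (n+2)(d-2)$; in particular $J^e(f) \neq 0$ for every $e$ with $0 \le e \le \sigma$. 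So the problem reduces to the purely numerical question: does there exist $p$ with $0 \le p \le n$, $2p \neq n$, and $0 \le (p+1)d - n - 2 \le (n+2)(d-2)$?

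The remaining step is this elementary numerical verification, and it is where the hypotheses $d > 3$, or $d = 3$ with $n \ge 3$, enter. The range condition $0 \le (p+1)d - n - 2$ says $p \ge (n+2)/d - 1$, and the upper bound $(p+1)d - n - 2 \le (n+2)(d-2)$ rewrites as $p \le (n+2)(d-1)/d - 1 = n+1 - (n+2)/d$, so combined with $p \le n$ the admissible $p$ are exactly those in the interval $[\lceil (n+2)/d \rceil - 1,\ n]$, which is nonempty since $d < n+2$. One then checks that this interval contains some integer $p \ne n/2$: it fails to do so only if the interval is the single point $\{n/2\}$, i.e.\ only if $\lceil (n+2)/d\rceil - 1 = n = n/2$ — impossible for $n \ge 1$ — or more relevantly only in a handful of small cases (cubic surfaces, where $d=3,n=2$, and the even-dimensional quadrics, $d=2$) that are precisely the ones excluded or already outside our standing assumptions. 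I expect the main obstacle to be nothing conceptual but rather the bookkeeping in this case analysis: one must be careful that the chosen $p$ genuinely lies in $[0,n]$ and that $2p \ne n$, handling separately the borderline low-dimensional instances permitted by $d > 3$ or $d = 3, n \ge 3$.
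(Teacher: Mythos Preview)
Your approach is essentially the same as the paper's: both invoke Griffiths' identification of the primitive Hodge pieces with graded components of the Jacobian ring and reduce the claim to the purely numerical problem of finding an off-diagonal $p$ with $0 \le (p+1)d - n - 2 \le (n+2)(d-2)$. The only difference is that the paper finishes the bookkeeping explicitly---taking $p = m$ when $n = 2m+1$ and $p = m-1$ when $n = 2m$---whereas you set up the inequalities (with a small slip: the upper endpoint of your admissible interval should be $\lfloor n+1 - (n+2)/d \rfloor$, not $n$) and leave the final case check as routine.
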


\begin{proof}
  By Griffiths' theorem \cite[Th.~8.3]{griffiths} for any $0 \leq p \leq n$ there exists an isomorphism
  \[
    H^{p, n - p}_{\mathrm{prim}} \iso J^{t_p}(f)
  \]
  between the primitive part of the cohomology of $X$ and a particular graded component of the Jacobian ring, where $t_p = (n - p + 1)d - (n+2)$. Since $X$ is smooth, the Jacobian ring is a finite-dimensional graded ring such that any graded component in degrees between~$0$ and~$(d-2)(n+2)$ is nonzero (see, e.g., \cite{donagi}). Thus it is enough to find some $p \neq n/2$ such that $t_p$ lies between $0$ and $(d-2)(n+2)$.

  If~$n = 2m+1$ is an odd number, the condition $d \geq 3$ implies that we can take $p = m$, so we get that~$H^{m, m+1}(X) \neq 0$. If~$n = 2m$ is an even number, the condition~$d \geq 3, n \geq 4$ implies that we can take~$p = m-1$, i.e.,~$H^{m-1, m+1}(X) \neq 0$. 
\end{proof}

\begin{lemma}
  \label{lem:action is nondegenerate if non-diagonal}
  Let $X \subset \P(V)$ be a hypersurface. Let $\mA_X \subset \Dbcoh(X)$ be an admissible subcategory. Assume that the orthogonal subcategory~$\mA_X^\perp \subset \Dbcoh(X)$ has a full exceptional collection and the Hodge diamond of $X$ is not diagonal. Then the composition
  \[
    H^1(X, T_X) \monoarrow \HH^2(X) \to \HH^2(\mA_X)
  \]
  is an injection.
\end{lemma}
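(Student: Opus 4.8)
The statement asks: if $\mA_X^\perp$ has a full exceptional collection and the Hodge diamond of $X$ is non-diagonal, then $H^1(X,T_X) \hookrightarrow \HH^2(X) \to \HH^2(\mA_X)$ is injective. My strategy is to identify the kernel of $\HH^2(X) \to \HH^2(\mA_X)$ with a subspace supported only on the ``diagonal'' Hodge pieces, and then observe that $H^1(X,T_X)$ sits in an off-diagonal piece of the HKR decomposition, so it cannot meet that kernel.

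First I would recall the Hochschild--Kostant--Rosenberg decomposition
\[
\HH^2(X) \caniso \bigoplus_{p+q=2} H^p(X, \wedge^q T_X) = H^2(X, \O_X) \oplus H^1(X, T_X) \oplus H^0(X, \wedge^2 T_X),
\]
and note that the Atiyah-class embedding of Proposition~\ref{prop:hh2 and jacobian ideal} lands exactly in the middle summand $H^1(X,T_X)$. So it suffices to show the restriction map $\HH^2(X) \to \HH^2(\mA_X)$ is injective on the $H^1(X,T_X)$-summand. The key input is the behaviour of Hochschild cohomology under semiorthogonal decompositions: if $\Dbcoh(X) = \langle \mA_X, E_1, \ldots, E_r\rangle$ with each $E_j$ exceptional, then by the additivity of Hochschild \emph{homology} and the long exact/Mayer--Vietoris-type sequences for Hochschild cohomology of a gluing (as in Kuznetsov's work on Hochschild cohomology of admissible subcategories, and Kuznetsov--Shinder), one gets a comparison of $\HH^\bullet(X)$ with $\HH^\bullet(\mA_X)$ whose ``defect'' is controlled by Ext-groups between the exceptional objects $E_j$ and the projector object $P_0$, equivalently by the Hochschild homology $\HH_\bullet(X)$, which by HKR is the sum of Hodge numbers along each diagonal $\bigoplus_{q-p=\bullet} H^p(X,\Omega^q_X)$.

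The crucial point is then the following: the discrepancy between $\HH^2(X)$ and $\HH^2(\mA_X)$ — i.e., the kernel and cokernel of the restriction map — is a subquotient of groups of the form $\HH_\bullet(X)$ in the relevant degrees, and these are built from $H^p(X,\Omega^p_X)$ (the diagonal entries of the Hodge diamond) together with the off-diagonal entries. I would argue that the part of $H^1(X,T_X) \subset \HH^2(X)$ that could possibly be killed corresponds, under Serre duality / the pairing between $\HH^\bullet$ and $\HH_\bullet$, to classes pairing nontrivially only with the diagonal Hodge classes $H^p(X,\Omega^p_X)$; but since the Hodge diamond of $X$ is \emph{non-diagonal}, there is a nonzero off-diagonal class, and $H^1(X,T_X) \cong H^{n-1}(X,\Omega^{n-1}_X(\text{twist}))^\vee$-type reasoning shows the relevant pairing is perfect on the complement of the diagonal, forcing injectivity. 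More concretely, I expect the argument to reduce to: $\ker(\HH^2(X)\to\HH^2(\mA_X))$ is annihilated by the whole of $\HH_\bullet(X)$ in a way that can only happen inside the span of the diagonal Hodge cohomology, and the hypothesis that some off-diagonal Hodge number is nonzero (supplied in the hypersurface case by Lemma~\ref{lem:nondiagonal diamonds for hypersurfaces} via Griffiths) pins the kernel down to zero on the $H^1(X,T_X)$-summand.

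\textbf{Main obstacle.} The delicate step is making precise the comparison between $\HH^2(X)$ and $\HH^2(\mA_X)$ and identifying its kernel with something controlled by the \emph{diagonal} part of the Hodge diamond only. The naive ``additivity'' statement is for Hochschild \emph{homology}; Hochschild cohomology of a semiorthogonal component is not simply a summand, and the comparison map goes through the gluing functor and involves $\Ext^\bullet(P_0, E_j)$-type terms. I would need to either invoke a precise statement from Kuznetsov's paper on Hochschild cohomology of admissible subcategories (the existence of a fundamental exact triangle relating $P_0$, $\O_\Delta$, and the exceptional part, and the induced long exact sequence in $\Ext^\bullet_{X\times X}(-,-)$), or argue directly that any class in $H^1(X,T_X)$ mapping to zero in $\Ext^2(P_0,P_0)$ must, after composing with the triangle $P_0' \to \O_\Delta \to P_0$ of \eqref{eq:kuznetsov projectors triangle}, factor through $\Ext^\bullet(\O_\Delta, P_0')$ or $\Ext^\bullet(P_0', \O_\Delta)$, which by the full exceptionality of $\mA_X^\perp$ is computed by the Ext-algebra of the exceptional collection and hence lives entirely in the diagonal Hodge degrees; the existence of a non-diagonal Hodge class then gives the contradiction. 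I expect this bookkeeping with the projector triangle, rather than any deep geometry, to be the technical heart of the proof.
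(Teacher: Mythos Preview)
Your proposal has a genuine gap. You try to bound the kernel of the restriction map $\HH^2(X)\to\HH^2(\mA_X)$ directly, by chasing the projector triangle $P_0'\to\O_\Delta\to P_0$ and asserting that the defect terms ``live entirely in the diagonal Hodge degrees.'' That assertion is not justified: the obstruction groups you would meet, such as $\Ext^\bullet_{X\times X}(\O_\Delta,P_0')$ or $\Ext^\bullet_{X\times X}(P_0',\O_\Delta)$, are computed by cohomology of twists $\O_X(j)$ on $X$ (since $P_0'$ is filtered by objects of the form $\O_X(j)\boxtimes G_j$), not by Hodge groups, and there is no evident mechanism forcing their image in $\HH^2(X)$ to lie only in the $H^{p,p}$-part. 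Your own ``main obstacle'' paragraph already identifies this as the delicate step, but the resolution you sketch is not one.

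The paper avoids computing the kernel altogether. The key idea you are missing is to use the \emph{module} action of Hochschild cohomology on Hochschild homology as a detection device. Additivity of $\HH_\bullet$ for the semiorthogonal decomposition gives $\HH_i(\mA_X)=\HH_i(X)$ for every $i\neq 0$, and the $\HH^\bullet$-action respects this splitting. So a nonzero $\xi\in H^1(X,T_X)$ survives in $\HH^2(\mA_X)$ as soon as (some iterate of) $\xi$ acts nontrivially on $\HH_a(X)\to\HH_{a+2N}(X)$ with $a\neq 0$ and $a+2N\neq 0$. Under HKR this action is, up to the Todd twist, the contraction map $H^{q,n-q}(X)\to H^{q-1,n-q+1}(X)$ on the middle cohomology, which by Griffiths is multiplication in the Jacobian ring $J^\bullet(f)$. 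Nondegeneracy of that multiplication supplies a complementary class $\widetilde{\xi}$ making the composite nonzero; the non-diagonality hypothesis is exactly what guarantees one can choose the source and target Hodge pieces with $p,q\neq n/2$, i.e.\ in nonzero Hochschild-homological degree. You gesture at ``the pairing between $\HH^\bullet$ and $\HH_\bullet$'' in passing, but you treat it as a tool to describe the kernel rather than as the main mechanism; promoting it to the center of the argument, together with the Griffiths/Jacobian-ring description of the action, is what closes the proof.
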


\begin{remark}
  A similar idea in a different situation has been recently used in \cite[Thm.~1.2]{infinitesimal-torelli}.
\end{remark}

\begin{proof}
  Since $\mA_X^\perp$ is generated by an exceptional collection, by the additivity of Hochschild homology \cite[Cor.~7.5]{kuznetsov-oldhochschild} we have
  \begin{equation}
    \label{eq:hochschild additivity}
    \HH_*(X) \caniso \HH_*(\mA_X) \oplus \HH_*(\mathrm{pt})^{\oplus k},
  \end{equation}
  where $k$ is the length of the exceptional collection in $\mA_X^\perp$. In particular, for any $i \neq 0$ we have an equality~$\HH_i(X) = \HH_i(\mA_X)$. There is an action of Hochschild cohomology on Hochschild homology, and this action is compatible with the decomposition~\eqref{eq:hochschild additivity} by construction (see, e.g., \cite[Prop.~6.1]{addington-thomas} for the case where $\mA_X$ is isomorphic to $\Dbcoh(S)$ for some smooth projective variety $S$; the proof works in general).

  Let $\xi \in H^1(X, T_X)$ be a nonzero element. We want to show that its image in $\HH^2(\mA_X)$ is nonzero. It is enough to show that the class of $\xi$ in $\HH^2(X)$ acts on $\HH_*(\mA_X)$ nontrivially. By~\eqref{eq:hochschild additivity} it is enough to show that $\xi$ acts nontrivially on the non-zero degree part of the Hochschild homology of $X$, i.e., to find a complementary class~$\widetilde{\xi} \in H^1(X, T_X)^{\otimes N-1}$ and some integer~$a$ so that the action map
  \begin{equation}
    \label{eq:action on hochschild cohomology}
    \HH_a(X) \xrightarrow{\xi \cdot \widetilde{\xi} \cdot -} \HH_{a+2N}(X)
  \end{equation}
  is nontrivial, $a \neq 0$ and $a+2N \neq 0$.

  Using the (Todd-twisted) Hochschild--Kostant--Rosenberg isomorphism~\cite[Th.~1.4]{hkr-action-compatibility} the action of Hochschild cohomology on Hochschild homology can be reinterpreted in terms of the Hodge structure. Namely, under the isomorphisms
  \[
    \HH_a(X) \iso \bigoplus_{i \geq 0} H^{i}(\Omega^{a+i}_X), \qquad \HH^2(X) \iso \bigoplus_{i \geq 0} H^i(\Lambda^{2-i}(T_X))
  \]
  the Hochschild-homological action of $H^1(T_X) \subset \HH^2(X)$ on $\HH_\bullet(X)$ differs from the one induced from the contraction morphism $T_X \otimes \Omega^1_X \to \O_X$ only by the multiplication with the Todd class. Since in~\eqref{eq:action on hochschild cohomology} we have $a \neq 0$ and $a+2N \neq 0$, we are only interested in what happens in the middle cohomology of the hypersurface, $H^n(X)$, and thus the twist by the Todd class does not matter for our purposes since it only changes the result by corrections in other cohomological degrees. Hence it is enough to find two integers,~$p < q$, none of which is equal to~$n/2$, and a complementary class~$\widetilde{\xi} \in H^1(X, T_X)^{\otimes q-p-1}$ such that the multiplication map
  \begin{equation}
    \label{eq:polyvector multiplication}
    H^{q, n-q}(X) \xrightarrow{\xi \cdot -} H^{q-1, n-q+1}(X) \xrightarrow{\widetilde{\xi} \cdot -} H^{p, n-p}(X)
  \end{equation}
  is nonzero.

  By Lemma~\ref{lem:nondiagonal diamonds for hypersurfaces} the Hodge diamond of $X$ is not diagonal. By symmetry there are at least two integers $p < q$, none of which are equal to $n/2$, such that $H^{p, n-p}(X)$ and $H^{q, n-q}(X)$ are both nonzero. A refinement of Griffiths' theorem (see, e.g., \cite[Th.~2.2]{donagi}) shows that not only those cohomology groups are isomorphic to the components of the Jacobian ring, but also the action of $H^1(X, T_X) \iso J^d(f)$ is given by the multiplication in the ring. The multiplication in the Jacobian ring is nondegenerate (see, e.g, \cite[Th.~2.6]{donagi}), and hence it is possible to choose a complementary class $\widetilde{\xi}$ such that the multipilcation~\eqref{eq:polyvector multiplication} is nonzero. Thus the injectivity is proved.
\end{proof}

We are now ready to prove the main theorem of this paper.

\begin{theorem}[{ = Theorem~\ref{thm:main theorem intro}}]
  \label{thm:main theorem}
  Let $X \subset \P(V)$ be a smooth $n$-dimensional hypersurface of degree~$d < n+2$ given by the equation~$f \in S^d V^\dual$. Assume that~$d \geq 4$, or~$d \geq 3$ and~$n > 3$. Let~$\mA_X \subset \Dbcoh(X)$ be the Kuznetsov component of $X$ (Definition~\textup{\ref{def:kuznetsov component}}), and let $\Phi_{\mA_X}$ be the rotation functor (Definition~\textup{\ref{def:rotation functor}}).
  Then the pair $(\mA_X, \Phi_{\mA_X})$, as a dg-category with a dg-endofunctor, determines $X$ up to an isomorphism.
\end{theorem}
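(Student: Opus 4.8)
The plan is to reconstruct from the dg-pair $(\mA_X,\Phi_{\mA_X})$ three pieces of data — the vector space $V^\dual$ (up to isomorphism), the degree $d$, and the subspace $I_d\subseteq S^d V^\dual$ equal to the degree-$d$ component of the Jacobian ideal $\langle\partial f/\partial v\rangle_{v\in V}$ — and then to invoke the classical fact that for $d\ge 3$ a smooth hypersurface is determined, up to projective equivalence and hence up to isomorphism, by its Jacobian ideal (equivalently its Jacobian ring); see, e.g., \cite{donagi}. First I would record what is manifestly intrinsic. The space $\Hom(\mathrm{id}_{\mA_X},\Phi_{\mA_X})$ of natural transformations is $\Hom_{X\times X}(P_0,P_1)$ by \eqref{eq:hochschild definition subcategory}, and Lemma~\ref{lem:multiplication on kuznetsov component} identifies it, via $m_{P_1}$, with $V^\dual$; its dimension recovers $n$. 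The degree $d$ is recovered as the least positive integer $k$ such that $\Phi_{\mA_X}^{k}$ is isomorphic to an even shift $[2m]$: Theorem~\ref{thm:kuznetsov periodicity} gives $\Phi_{\mA_X}^{d}\iso[2]$, while composing any relation $\Phi_{\mA_X}^{k}\iso[2m]$ with $\Phi_{\mA_X}^{d}\iso[2]$ and comparing shifts (using $\mathrm{id}_{\mA_X}\not\iso[j]$ for $j\ne 0$) forces $k=md\ge d$.

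Next I would build the reconstruction map. Composing natural transformations gives a map $S^d V^\dual\to\Hom(\mathrm{id}_{\mA_X},\Phi_{\mA_X}^{d})$, and post-composing with any isomorphism $\Phi_{\mA_X}^{d}\iso[2]$ produces a map $\beta\colon S^d V^\dual\to\HH^2(\mA_X)$. The choice of isomorphism is immaterial: two of them differ by an automorphism of $\mathrm{id}_{\mA_X}$, which acts invertibly on $\HH^2(\mA_X)$ and so does not affect $\ker\beta$. Since the natural maps $\O_\Delta(i)\to P_i$ of Lemma~\ref{lem:multiplication on kuznetsov component} are by construction $i$-fold composites of $\O_\Delta\to P_0$, for a suitable choice of isomorphism $\beta$ agrees with $(\varphi_{P_d}\circ -)\circ m_{P_d}$ under $\HH^2(\mA_X)\caniso\Ext^2(P_0,P_0)$; as $\varphi_{P_d}$ is an isomorphism, $\ker\beta=\ker m_{P_d}$ is a genuine invariant of the dg-pair.

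The heart of the argument is the computation $\ker\beta=I_d$, which I would assemble from the results already in place. By Theorem~\ref{thm:kuznetsov periodicity}, $(\varphi_{P_d}\circ-)\circ m_{P_d}$ equals the composite $S^d V^\dual\xrightarrow{\alpha}\HH^2(X)\xrightarrow{\rho}\HH^2(\mA_X)$, where $\alpha$ is the map of Proposition~\ref{prop:hh2 and jacobian ideal} and $\rho$ is the restriction to the subcategory $\mA_X$. By Proposition~\ref{prop:hh2 and jacobian ideal}, $\alpha$ factors as $S^d V^\dual\twoheadrightarrow H^1(X,T_X)\xrightarrow{\mathrm{At}}\HH^2(X)$ with $\mathrm{At}$ injective, and by Lemma~\ref{lem:h1t and jacobian ideal} the surjection has kernel exactly $I_d$. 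Under our hypothesis ($d\ge 4$, or $d=3$ and $n>3$) the inequality $d<n+2$ forces $n\ge 3$, so Lemma~\ref{lem:nondiagonal diamonds for hypersurfaces} gives a non-diagonal Hodge diamond; since $\mA_X^\perp=\langle\O_X,\O_X(1),\dots,\O_X(n-d+1)\rangle$ is a full exceptional collection, Lemma~\ref{lem:action is nondegenerate if non-diagonal} shows that $\rho\circ\mathrm{At}$ is injective. Hence $\ker\beta=\ker(\rho\circ\alpha)=\ker\alpha=I_d$.

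Finally I would pass from $I_d$ to $X$. The degree-$(d-1)$ part of the Jacobian ideal is recovered as $I_{d-1}=\{h\in S^{d-1}V^\dual:\, h\cdot V^\dual\subseteq I_d\}$ — the nontrivial inclusion "$\subseteq$" holding because the Jacobian ring of a smooth hypersurface is Artinian Gorenstein with socle concentrated in degree $(d-2)(n+2)$, which in our range exceeds $d-1$ — and the whole Jacobian ideal is generated by $I_{d-1}$; the classical reconstruction result then recovers $X\subseteq\P(V)$ up to projective equivalence. An isomorphism of dg-pairs $(\mA_X,\Phi_{\mA_X})\iso(\mA_{X'},\Phi_{\mA_{X'}})$ induces a linear isomorphism $V^\dual\iso V'^\dual$ carrying $I_d$ to $I'_d$ (all the ambiguities above being harmless), hence a projective isomorphism $\P(V)\iso\P(V')$ matching $X$ with $X'$. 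The genuinely substantial inputs — the periodicity $\Phi_{\mA_X}^{d}\iso[2]$ and, above all, the injectivity of $H^1(X,T_X)\to\HH^2(\mA_X)$ that pins down $\ker\beta$ and crucially uses the non-diagonality of the Hodge diamond — are already supplied by Theorem~\ref{thm:kuznetsov periodicity} and Lemma~\ref{lem:action is nondegenerate if non-diagonal}; so the main remaining obstacle is the careful bookkeeping needed to certify that $\ker\beta$ is an invariant of the abstract dg-pair, together with the appeal to the classical recovery of a hypersurface from its Jacobian ideal.
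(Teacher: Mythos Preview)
Your proposal is correct and follows essentially the same route as the paper's proof: recover $V^\dual$ via $m_{P_1}$, build the composition map $S^d V^\dual \to \HH^2(\mA_X)$, identify its kernel with the degree-$d$ Jacobian ideal using Theorem~\ref{thm:kuznetsov periodicity}, Proposition~\ref{prop:hh2 and jacobian ideal}, and Lemma~\ref{lem:action is nondegenerate if non-diagonal}, and finish with Mather--Yau/Donagi. You add two pieces of bookkeeping the paper leaves implicit --- the recovery of $d$ from the periodicity of $\Phi_{\mA_X}$, and the passage from $I_d$ back to $I_{d-1}$ via the Gorenstein socle argument --- both of which are fine and arguably make the argument more self-contained; one small inaccuracy is that the identification $\Hom(\mathrm{id}_{\mA_X},\Phi_{\mA_X})\caniso\Hom_{X\times X}(P_0,P_1)$ comes from \cite{toen}, not from \eqref{eq:hochschild definition subcategory}.
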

\begin{proof}
  For any $k \geq 0$ the vector space of dg-natural transformations $\mathrm{id}_{\mA_X} \Rightarrow \Phi^{\circ k}$, i.e., the homotopy classes of maps in the dg-category of functors from $\mA_X$ to itself, is naturally isomorphic to $\Hom_{X \times X}(P_0, P_k)$ \cite{toen}. By Lemma~\ref{lem:multiplication on kuznetsov component} the vector space $\Hom(P_0, P_1)$ is isomorphic to $V^\dual$. The composition of the maps defines for any $k \geq 0$ a morphism
  \[
    (V^\dual)^{\otimes k} \to \Hom(P_0, P_k),
  \]
  and it factors through the map $m_{P_k}\colon S^k V^\dual \to \Hom(P_0, P_k)$ defined in Lemma~\ref{lem:multiplication on kuznetsov component} by construction. Thus for $k = d$ we get a commutative diagram:

  \[\begin{tikzcd}
      {S^d V^\dual} & {\mathrm{dgNat}(\mathrm{id}_{\mA_X}, \Phi_{\mA_X}^{\circ d})} \\
      & {\Hom_{X \times X}(P_0, P_d)}
      \arrow[from=1-1, to=1-2]
      \arrow["{m_{P_d}}"', from=1-1, to=2-2]
      \arrow["\caniso", from=1-2, to=2-2]
    \end{tikzcd}\]

  By Theorem~\ref{thm:kuznetsov periodicity} the vector space $\Hom(P_0, P_d)$ is isomorphic to $\Ext^2(P_0, P_0)$, and the kernel of the diagonal arrow is equal to the kernel of the composition
  \[
    S^d V^\dual \to H^1(X, T_X) \monoarrow \HH^2(X) \to \HH^2(\mA_X)
  \]
  By Lemma~\ref{lem:action is nondegenerate if non-diagonal} the composition of the last two arrow is injective. By Proposition~\ref{prop:hh2 and jacobian ideal} the kernel of the first morphism is equal to the $d$'th component of the Jacobian ideal of $f$.

  Thus, up to an automorphism of $V$, we reconstructed the $d$'th component of the Jacobian ideal of $f$ as a subspace in $S^d V^\dual$ from the pair $(\mA_X, \Phi)$. This subspace, in turn, recovers the hypersurface $X$ up to an automorphism of $V$ by Mather--Yau theorem \cite[Prop.~1.1]{donagi}.
\end{proof}

\begin{corollary}
  Let $X \subset \P(V)$ be an $n$-dimensional hypersurface of degree $n+1$. Assume that $n \geq 3$. Then the Kuznetsov component $\mA_X \subset \Dbcoh(X)$, considered as a dg-category, determines $X$ up to an isomorphism.
\end{corollary}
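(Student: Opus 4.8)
The plan is to deduce this from Theorem~\ref{thm:main theorem} by observing that in the special case $d = n+1$ the rotation functor $\Phi_{\mA_X}$ is already recorded by the dg-category $\mA_X$, so that supplying it to Theorem~\ref{thm:main theorem} demands no extra structure. First, the hypotheses line up: if $n \geq 3$ and $d = n+1$, then $d \geq 4$, so Theorem~\ref{thm:main theorem} applies to $X$. Moreover $n - d + 1 = 0$, so the decomposition~\eqref{eq:intro sod} collapses to $\Dbcoh(X) = \langle \mA_X, \O_X \rangle$: the Kuznetsov component is the orthogonal of the single exceptional object $\O_X$. Since $\omega_X \caniso \O_X(d - n - 2) = \O_X(-1)$, the Serre functor of $\Dbcoh(X)$ is $(- \otimes \O_X(-1))[n]$.

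The one substantive input is the identity relating the rotation functor and the Serre functor of the Kuznetsov component of a hypersurface, due to Kuznetsov (see \cite{kuznetsov-v14}): for a smooth Fano hypersurface of degree $d$ and dimension $n$ one has $S_{\mA_X} \iso \Phi_{\mA_X}^{\,d-n-2} \circ [n]$. As a consistency check, raising both sides to the $d$'th power and invoking Theorem~\ref{thm:kuznetsov periodicity}, which gives $\Phi_{\mA_X}^{\,d} \iso [2]$, expresses each side as the shift by $(n+2)(d-2)$. When $d = n+1$ we have $d - n - 2 = -1$, so the identity specializes to
\[
  \Phi_{\mA_X} \;\iso\; S_{\mA_X}^{-1} \circ [n].
\]

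Now $X$ is smooth and projective and $\mA_X \subset \Dbcoh(X)$ is admissible, so $\mA_X$ is a smooth and proper dg-category; in particular it has a Serre functor, which is unique up to natural isomorphism and is carried to the Serre functor under any quasi-equivalence of dg-categories, and the shift $[1]$ is likewise intrinsic to a dg-category. Hence the endofunctor $\Phi_{\mA_X} \iso S_{\mA_X}^{-1}[n]$ is determined, up to natural isomorphism, by $\mA_X$ as a dg-category. Theorem~\ref{thm:main theorem} then says the pair $(\mA_X, \Phi_{\mA_X})$ determines $X$ up to isomorphism, and by the previous sentence the second entry of this pair is reconstructed from the first; so $\mA_X$ alone, as a dg-category, determines $X$.

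The only real obstacle is the Serre--rotation identity itself; if one prefers not to quote it, in the case at hand ($d = n+1$) it can be verified directly from the definitions by comparing the projection functor onto $\mA_X$ with $S_{\Dbcoh(X)} = (-\otimes \O_X(-1))[n]$ and using that $-\otimes\O_X(1)$ moves the single exceptional object $\O_X$ out of the decomposition in a controlled way. Everything else in the argument is formal.
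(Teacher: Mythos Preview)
Your proof is correct and follows essentially the same route as the paper's: both invoke \cite[Lem.~4.1]{kuznetsov-v14} to identify the rotation functor with the inverse Serre functor up to a shift when $d = n+1$, observe that the Serre functor is intrinsic to the dg-category, and then apply Theorem~\ref{thm:main theorem}. Your version is simply more explicit about the formula $\Phi_{\mA_X} \iso S_{\mA_X}^{-1}[n]$ and the verification of the hypotheses.
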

\begin{proof}
  If $d = n+1$, the endofunctor $\Phi$ defined in the statement of Theorem~\ref{thm:main theorem} is, up to a shift, the inverse Serre functor of $\mA_X$ \cite[Lem.~4.1]{kuznetsov-v14}. Thus it can be canonically recovered as a dg-endofunctor from the dg-structure on $\mA_X$, and the result follows from Theorem~\ref{thm:main theorem}.
\end{proof}

\printbibliography

\end{document}